\documentclass[]{siam_preamble/siamart251216}

\usepackage[utf8]{inputenc}
\usepackage{amsmath,amsfonts,amssymb}

\usepackage[dvipsnames]{xcolor}
\usepackage{graphicx}
\usepackage{svg}

\usepackage{yhmath}

\usepackage{pifont}

\usepackage[capitalize]{cleveref}
\crefname{section}{Section}{SectionsS}

\usepackage{enumitem}

\usepackage{tikz-cd}

\newcommand{\sspan}{\operatorname{span}}

\newcommand{\diag}{\operatorname{diag}}

\newcommand{\norm}[1]{\left\lVert#1\right\rVert}
\newcommand{\p}{\partial}

\newcommand{\vertiii}[1]{{\left\vert\kern-0.25ex\left\vert\kern-0.25ex\left\vert #1 
    \right\vert\kern-0.25ex\right\vert\kern-0.25ex\right\vert}}

\renewcommand{\P}{\mathcal{P}}
\newcommand{\G}{\mathcal{G}}
\newcommand{\M}{\mathcal{M}}

\newcommand{\hh}{\hat{h}}
\newcommand{\hu}{\hat{u}}
\newcommand{\huu}{\widehat{\u}}

\newcommand{\hff}{\widehat{\f}}
\newcommand{\hp}{\hat{p}}
\newcommand{\hq}{\hat{q}}
\newcommand{\hr}{\hat{r}}
\newcommand{\hrho}{\hat{\rho}}
\newcommand{\hE}{\hat{E}}
\newcommand{\hm}{\hat{m}}

\newcommand{\what}{\widehat}

\renewcommand{\u}{\mathbf{u}}
\newcommand{\f}{\mathbf{f}}

\newcommand{\x}{\mathbf{x}}

\newcommand{\zero}{\mathbf{0}}

\newcommand{\sR}{\mathsf{R}}
\newcommand{\sM}{\mathsf{M}}
\newcommand{\sE}{\mathsf{E}}
\newcommand{\sC}{\mathsf{C}}

\newcommand{\sU}{\mathsf{U}}
\newcommand{\sP}{\mathsf{P}}
\newcommand{\sH}{\mathsf{H}}

\newcommand{\R}{\mathbb{R}}

\newcommand{\PK}{\Phi_K}
\newcommand{\ps}{\underset{\mathrm{ps}}{\ast}}
\newcommand{\co}{\underset{\mathrm{co}}{\ast}}

\newcommand{\as}{\underset{\mathrm{as}}{\ast}}
\newcommand{\pinf}{\underset{\infty}{\ast}}
\newcommand{\pzer}{\underset{0}{\ast}}

\newcommand{\fr}{\mathsf{fr}}
\newcommand{\hyp}{\operatorname{hyp}}

\newcommand{\cmark}{\ding{51}}%
\newcommand{\xmark}{\ding{55}}%

\newsiamremark{remark}{Remark}

\headers{Hyperbolicity-Preserving Stochastic Galerkin}{H. Meghaichi and Y. Xing}

\title{
Hyperbolicity-Preserving Stochastic Galerkin Methods for Conservation Laws
Based on Associative Truncated Products on Polynomial Spaces
}
\author{Haroun Meghaichi \and Yulong Xing\thanks{Department of Mathematics, The Ohio State University, Columbus, OH 43210, USA. {\tt meghaichi.1@osu.edu, xing.205@osu.edu}}
}

\begin{document}

\maketitle

\begin{abstract}
Stochastic Galerkin discretizations of nonlinear hyperbolic
conservation laws may lose hyperbolicity because the standard
pseudospectral product is generally nonassociative, leading to
non-commuting blocks in the flux Jacobian matrix. We develop a novel framework for
constructing hyperbolicity-preserving stochastic Galerkin systems based on
associative truncated products on polynomial spaces.
In one stochastic dimension, we characterize associative
truncated products through a single polynomial datum and identify examples with
useful symmetry, positivity, and spectral properties, including collocation
products and an associative symmetric product based on Gaussian quadrature nodes. 
We prove a consistency result showing that, under suitable 
projection-error assumptions, these products converge to the classical product
as the polynomial degree grows.
For systems with rational fluxes, we derive sufficient
conditions under which the resulting stochastic Galerkin flux remains
hyperbolic on the corresponding admissible set.
Applications to the one-dimensional isothermal and compressible Euler
equations show accurate statistical approximation and robust hyperbolicity
preservation of the computed stochastic Galerkin states.
\end{abstract}

\begin{keywords}
Stochastic Galerkin; Hyperbolic conservation laws; Uncertainty quantification; Associative truncated products; Polynomial chaos expansion;
Euler equations;
Hyperbolicity-preserving method
\end{keywords}

\begin{MSCcodes}
65C30, 65M08, 65M60, 65M20, 35L65  
\end{MSCcodes}

\section{Introduction}

\label{sec:introduction}
Uncertainty quantification for hyperbolic conservation
laws has recently received considerable attention due to its
importance in many applications in engineering and physics, where 
initial conditions, boundary conditions, or model parameters are often uncertain. 
In such settings, the prediction made
from a deterministic model provides only one possible realization of the solution. 
The uncertainty quantification then aims to provide a statistical description of the solution ensemble to the non-deterministic model.
Examples of hyperbolic conservation laws with uncertain data include 
the shallow water equations, the isothermal Euler equations, and the general compressible Euler equations. 

Two main approaches have been commonly used to discretize the uncertainty in the
literature. \textit{Non-intrusive} methods, such as
the stochastic collocation method
\cite{
babuškaFiniteElementMethod1970,
chertockSplineBasedStochasticCollocation2025,
grzelakStochasticCollocationMonte2019,
gunzburgerSparseCollocationMethods2017,
nobileSparseGridStochastic2008,
xiuHighOrderCollocationMethods2005} and Monte Carlo sampling 
\cite{barthMultilevelMonteCarlo2013,
feireislMonteCarloMethod2026,mishraSparseTensorMultilevel2012a,
pareschiIntroductionMonteCarlo2001},
solve the deterministic model at specific selected or random points in
the stochastic space, and then reconstruct the
statistical moments of the solution from these samples.  
These approaches are non-intrusive since they can use existing deterministic
solvers with little modification, which makes them easy to implement and typically
preserves the properties of the deterministic system. 
Moreover, the simulation of the samples can be done
in parallel to reduce the wall-clock time of the simulation.

The second type of approach, which is the focus of this article, is the
\textit{intrusive} stochastic
Galerkin (SG) method, which relies on expanding the solution in terms of
orthogonal polynomials in the stochastic space, and projecting the governing
equations onto the finite-dimensional space spanned by these polynomials.
This approach has been widely used in the literature for various hyperbolic
systems of conservation laws with great success
\cite{
benderEntropyConservativeDiscontinuousGalerkin2024,
chertockChallengesStochasticGalerkin2026,
daiHyperbolicitypreservingWellbalancedStochastic2021,
daiHyperbolicitypreservingWellbalancedStochastic2022,
desprésRobustUncertaintyPropagation2013,
gersterHaartypeStochasticGalerkin2025,
lemaîtreSpectralMethodsUncertainty2010,
petterssonPolynomialChaosMethods2015,
schlachterHyperbolicitypreservingStochasticGalerkin2018}.
The SG method typically leads to high-order
convergence provided that the solution is smooth in the stochastic parameter(s),
and can be efficient if accurate statistical information are required.
However, it is well known (e.g.,
\cite{jinStudyHyperbolicityKinetic2019,desprésRobustUncertaintyPropagation2013})
that the SG method may lead to a non-hyperbolic deterministic system
in certain situations unless special care is taken, which may lead to issues
with the well posedness of the system and the stability of the numerical method
used. Multiple approaches have been proposed in the literature to overcome this
issue, such as 
transforming the SG method to a stochastic collocation method at the quadrature nodes 
\cite{zhongEntropyStableGalerkin2022},
using the Roe-variable transformation
\cite{petterssonStochasticGalerkinMethod2014},
the entropy-based approach
\cite{poëtteUncertaintyQuantificationSystems2009},  the Haar wavelet basis
instead of the polynomial chaos basis
\cite{gersterHaartypeStochasticGalerkin2025,petterssonStochasticGalerkinMethod2014},
and problem-specific modifications of
the SG system, for example for the shallow water
equations \cite{daiHyperbolicitypreservingWellbalancedStochastic2021}. 
These approaches are effective in important cases, but their extension to broader classes of
nonlinear conservation laws is not always straightforward. 

In this work, we propose a new framework based on the algebraic structure of the 
stochastic polynomial space, to design hyperbolicity-preserving
SG methods for hyperbolic conservation laws. 
The standard pseudospectral product, also referred to as the Galerkin product, is 
obtained by projecting pointwise products back
onto the stochastic approximation space.
It is well-known that the pseudospectral product is not
associative in general (e.g., in
\cite[p. 255]{sullivanIntroductionUncertaintyQuantification2015}), 
and this lack of associativity is closely related to the loss of
hyperbolicity in the SG method since it leads to Jacobian bloc matrices where
simultaneous diagonalization is not available. 
One idea is to use a space on which the pseudospectral product 
is associative,
as discussed in
\cite{petterssonStochasticGalerkinMethod2014} for Euler's equations and in
\cite{gersterHaartypeStochasticGalerkin2025} for a wider class of systems, where
the discretization in the stochastic space is carried out using a Haar-type basis.
Our main idea is to use an associative truncated polynomial product 
on the same standard polynomial space to define the discrete multiplication
operator in the SG method. This preserves the high-order approximation structure of
polynomial chaos expansions, and at the same time 
restores the algebraic associativity  needed for hyperbolicity arguments.
In particular, we study a class of truncated products, that form a unital algebra
on the polynomial space with the added condition that the truncated product
coincides with the classical product whenever possible.

The main contributions of the paper are as follows.
\begin{enumerate}[label=(\roman*)]
\item We introduce truncated products on stochastic polynomial spaces and
identify algebraic properties, including symmetry, associativity,
hyperbolicity, uniform hyperbolicity, and positive definiteness, that are
relevant for SG discretizations.
\item In one stochastic dimension, we characterize associative truncated
products on the $P_K$ polynomial space through the single element ($\phi_1\ast\phi_K$). We show
that symmetric associative products are determined by one scalar parameter and
derive spectral criteria for hyperbolicity and positivity. 
Extension to multidimensional random variables via tensor products are also discussed.
\item We show that the pseudospectral product is not associative on
$P_K$ for $K\ge2$, and we construct several associative alternatives,
including collocation products and an associative symmetric product based on
Gaussian quadrature nodes. The latter shares the first multiplication matrix
with the pseudospectral product but constructs the higher multiplication
matrices differently so as to enforce associativity.
\item We prove an approximation result showing that, under suitable conditions
on the projection errors and the operator norms of the truncated products, the
truncated products converge to the usual product as the polynomial degree
increases.
\item For conservation laws with rational fluxes, we derive sufficient
conditions under which the SG flux obtained from an associative truncated
product remains hyperbolic on the corresponding admissible set. We apply this
framework to the isothermal Euler equations and the one-dimensional compressible
Euler equations.
\end{enumerate}
Numerical experiments demonstrate that the proposed SG formulations remain
hyperbolic in the tested regimes and accurately capture statistical quantities
such as the mean, variance, median, and confidence intervals. In smooth
settings, the observed convergence agrees with the expected order of the
finite-volume discretization, while it captures the behavior
of the method for nonsmooth solutions in shock-tube examples.

This paper is organized as follows. In \cref{sec:notation}, we introduce
the notation used throughout the article. In \cref{sec:truncated_product}, we
discuss the definition and properties  of truncated products on polynomial
spaces, as well as examples and important results such that the approximation
property and tensor-product extensions. 
In \cref{sec:hyperbolic_systems}, we derive the hyperbolicity-preserving
criterion for SG systems using the truncated products, and apply it to 
the isothermal Euler and compressible Euler equations.
Finally, we present numerical experiments in \cref{sec:numerical_results} 
and concluding remarks in \cref{sec:conclusion}.

\section{Notation}
\label{sec:notation}
Throughout the paper, let $(\Xi,\mathcal{F},\mu)$ be a probability
space with $\Xi\subset{\mathbb{R}}^{d}$, and $w$ be a density function on
$\Xi$.  We use
$\langle\cdot,\cdot\rangle_{w}$ and $\norm{}_{w,\Xi}$ to denote the usual
weighted inner product and norm on $L^{2}_{w}(\Xi)$, namely
\begin{subequations}
\begin{eqnarray}
\langle f,g\rangle_{w} = \int_{\Xi} fg\  \mathrm{d}\mu 
= \int_{\Xi} f(\xi)g(\xi)\ w(\xi) \mathrm{d}\xi,\qquad
\norm{f}_{w,\Xi}=\sqrt{\langle f,f\rangle_{w}},\\
L^2_{w}(\Xi)=\{f:\Xi\to \mathbb{R}, \text{ measurable}\mid
\norm{f}_{w,\Xi}<\infty\}.
\end{eqnarray}
\end{subequations}
Let $\{\phi_k\}_{k=0}^{\infty}$ be a complete orthonormal family of $L^2_{w}(\Xi)$ with $\phi_0\equiv 1$, and
we use $\Phi_K$ to denote $\sspan \{\phi_k\}_{k=0}^{K}$ where $K\ge
0$. In particular, we are interested in orthonormal polynomial bases on
$\Xi$, and we use $P_K(\Xi)$ to denote the space of polynomials of
total degree at most $K$ on $\Xi$, and $Q_K(\Xi)$ to denote the space
of tensor-product polynomials of degree at most $K$ in each variable.

Given a domain $\Omega\subset \mathbb{R}^{m}$  and a function
$h:\Omega\times [0,T]\times \Xi \to \mathbb{R}$, with $h(x,t,\cdot)\in L^2_{w}(\Xi)$ for all $(x,t)\in \Omega\times
[0,T]$, we denote its generalized polynomial chaos coefficients by $\hh=(\hh_k)_{k=0}^{\infty}$ with
$\hh_k(x,t)=\langle h(x,t,\cdot), \phi_{k}(\cdot) \rangle_{w}$. 
We use a similar notation for functions in the finite-dimensional
space $\PK$, where $\hp_k$ denotes the coefficient of $p\in\PK$ in
the basis $\{\phi_k\}_{k=0}^K$, and $\hp=(\hp_0\cdots,\hp_K)^T$ is its coefficient
vector, that is,
\begin{equation}p(\xi)=\hp_0 \phi_0(\xi)
+ \hp_1 \phi_1(\xi)+\dots+\hp_K \phi_K(\xi)
=\sum_{k=0}^K \hp_k \phi_k(\xi).
\label{eqn:p_expansion}
\end{equation}
For simplicity, we will denote by $\G_K$ the reconstruction operator that maps $\hp$ to $p$ defined in
\eqref{eqn:p_expansion}, and $\Pi_K$ the orthogonal
 projection from $L^2_{w}(\Xi)$ onto $\PK$.

\section{Truncated product spaces}
\label{sec:truncated_product}
The polynomial space $\PK$ is typically not closed under binary and unary operations
such as multiplication, division or square root. Thus, intrusive SG discretizations of
the nonlinear terms in PDEs typically require a discrete
analog of these operations on $\PK$. In principle, a discrete analog of
division, square root, and other operations can be defined directly from a given
discrete multiplication operator as discussed in
\cite{debusschereNumericalChallengesUse2004}. In this paper,
we focus on multiplication operators that preserve certain
properties of the usual
multiplication, which we will describe in the following definition.

\begin{definition}\label{def:truncated_product}
A binary operation $\ast: \PK^2 \to \PK$ is called a
\textit{truncated product} on $\PK$ if it satisfies the following conditions
\begin{enumerate}[label=(\alph*)]
\item Distributivity: The map $(p,q)\to p\ast q$ is bilinear.
\item Commutativity: For all $p,q\in \PK$ we have $p\ast q=q\ast p$.
\item Consistency: If $pq\in \PK$, then $p\ast q=pq$.
\end{enumerate}
If these conditions are met, we say that $(\PK,\ast)$ is a truncated product space.
\end{definition}

A truncated product is fully determined by its action on the basis $\{\phi_{k}\}_{k=1}^K$, 
i.e., the products $\phi_k\ast\phi_\ell$, $1\le k,\ell\le K$.
Hence, the quantities $\langle \phi_i, \phi_k\ast
\phi_\ell\rangle_{w}$ fully describe the truncated product $\ast$, and
we will refer to them as the structure constants following the
nomenclature in Lie Algebra where they appear (although in a
different context, \textit{e.g.}, \cite{cornwellGroupTheoryPhysics1997}). More
precisely, for $p,q\in \PK$ with coefficient vectors $\hp,\hq\in\mathbb{R}^{K+1}$, we have
\begin{equation}
\widehat{p\ast q}= \P(\hp)\hq,\ \ \P(\hp)=\sum_{k=0}^{K}\hp_k
\M^k,\ \
\M^{k}_{i+1,j+1}:= \langle \phi_i, \phi_k\ast
\phi_j\rangle_{w},\ 0\le i,j,k\le K.
\label{eqn:pq_M_form}
\end{equation}
We call $\P:\mathbb{R}^{K+1}\to
\mathbb{R}^{(K+1)\times(K+1)}$ the multiplication
tensor. From a computational point of view, the truncated product can be
represented by it structure coefficients, and the matrices
$\{\M^k\}_{k=0}^K$ can be assembled offline for any given truncated product on $\PK$.

In addition to the conditions stated in \cref{def:truncated_product}, we
introduce additional properties that some truncated products may satisfy. These
properties will be useful in the analysis of the stochastic Galerkin method for
conservation laws.

\begin{definition}
Let $(\PK,\ast)$ be a truncated product space, we say that
\begin{itemize}
\item $(\PK,\ast)$ is symmetric iff $ \langle p, q\ast r\rangle_{w}=\langle
p\ast q,  r\rangle_{w}$ for all $p,q,r\in \PK$.
\item  $(\PK,\ast)$ is associative iff $(p\ast q)\ast r=p\ast
(q\ast r)$ for all $p,q,r\in \PK$.
\item   $(\PK,\ast)$ is hyperbolic if $\P(\hp)$ is diagonalizable with real eigenvalues for all $\hp\in \mathbb{R}^{K+1}$. 
\item  $(\PK,\ast)$ is uniformly hyperbolic if there exists an
invertible matrix $V$ such that $V^{-1}\P(\hp)V$ is diagonal for all $\hp\in
\mathbb{R}^{K+1}$.
\item $(\PK,\ast)$ is positive definite if all eigenvalues of $\P(\hp)$ are positive
whenever $p(\xi)>0$ for all $\xi\in\Xi$, where $p=\G_K(\hp)$.
\end{itemize}
\end{definition}

It is worth noting here that the first two definitions above can be
restated in terms of the matrices $\{\M^k\}_{k=0}^K$ as follows:

\begin{itemize}
\item $\ast$ is symmetric iff $(\M^k)^T=\M^k$ for all $0\le k \le K$.
\item $\ast$ is associative iff the matrices $\{\M^k\}_{k=0}^K$
commute, meaning that $\M^i\M^j=\M^j\M^i$ for all $0\le i,j\le K$.
\end{itemize}

Furthermore, the associativity of the truncated product implies  the following
important property of the multiplication tensor $\P$. The proof is provided in the appendix.
\begin{lemma}
\label{lem:associativity_PpPq}
Let $\ast$ be an associative truncated product, then
\begin{equation}
\P(\P(\hp)\hq)=\P(\hp)\P(\hq),\qquad \forall \hp,\hq\in\mathbb{R}^{K+1}.
\label{eqn:P_P_pq_first}
\end{equation}
Moreover, if $\P(\hp)$ is invertible, then
\begin{equation}
\P(\P(\hp)^{-1}\hq)=\P(\hp)^{-1}\P(\hq).
\label{eqn:P_P_pq_second}
\end{equation}
\end{lemma}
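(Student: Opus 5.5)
The first identity expresses that, under associativity, the coefficient-level operator $\P$ is an algebra homomorphism from $(\mathbb{R}^{K+1},\ast)$ into the matrix algebra. By \eqref{eqn:pq_M_form} and the reconstruction operator $\G_K$, the matrix $\P(\hp)$ is exactly the matrix of the linear map $L_p: r\mapsto p\ast r$ on $\PK$, written in the orthonormal basis $\{\phi_k\}_{k=0}^K$. In other words, $\widehat{p\ast r}=\P(\hp)\hat r$ for all $r\in\PK$. With this identification, \eqref{eqn:P_P_pq_first} reads $L_{p\ast q}=L_p\circ L_q$, because $\P(\hp)\hq=\widehat{p\ast q}$.

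I will verify this identity by applying both sides to an arbitrary $\hat r$. On the left,
\[
\P(\P(\hp)\hq)\hat r=\P(\widehat{p\ast q})\hat r=\widehat{(p\ast q)\ast r}.
\]
On the right,
\[
\P(\hp)\P(\hq)\hat r=\P(\hp)\,\widehat{q\ast r}=\widehat{p\ast(q\ast r)}.
\]
Associativity makes the two sides equal. Since $\hat r$ was arbitrary, the matrices coincide. This step uses only the bilinearity that makes $\P$ well defined; commutativity is not even needed.

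For \eqref{eqn:P_P_pq_second}, assume $\P(\hp)$ is invertible and set $\hat s=\P(\hp)^{-1}\hq$, so that $\P(\hp)\hat s=\hq$. Applying \eqref{eqn:P_P_pq_first} to the pair $(\hp,\hat s)$ gives
\[
\P(\hq)=\P(\P(\hp)\hat s)=\P(\hp)\P(\hat s).
\]
Multiplying on the left by $\P(\hp)^{-1}$ yields $\P(\hat s)=\P(\hp)^{-1}\P(\hq)$, which is the claim.

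No step is really an obstacle. The only point needing care is the index bookkeeping in \eqref{eqn:pq_M_form}: one must check that $\P(\hp)\hat r$ is indeed the coefficient vector of $p\ast r$ and not of $r\ast p$. Expanding by bilinearity,
\[
(\P(\hp)\hat r)_{i+1}=\sum_{k,j}\hp_k\hat r_j\langle\phi_i,\phi_k\ast\phi_j\rangle_w=\langle\phi_i,p\ast r\rangle_w,
\]
and in any case commutativity removes the ambiguity.
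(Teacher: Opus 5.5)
Your proof is correct and takes essentially the same route as the paper. Both arguments apply $\P(\P(\hp)\hq)$ and $\P(\hp)\P(\hq)$ to an arbitrary $\hat r$, identify the results as the coefficient vectors of $(p\ast q)\ast r$ and $p\ast(q\ast r)$, and conclude by associativity, with the second identity obtained by applying the first to $\P(\hp)^{-1}\hq$.
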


We also note that several direct connections among these
properties. For instance, every symmetric truncated product is
hyperbolic, and an associative  hyperbolic
truncated product is
uniformly hyperbolic, since the commuting family $\{\M^k\}_{k=0}^K$ is simultaneously
diagonalizable. 
Also, some of these definitions have previously appeared in the literature, although
under different names or in more restricted settings. For example, uniformly
hyperbolic truncated products are discussed in
\cite{gersterHaartypeStochasticGalerkin2025,petterssonStochasticGalerkinMethod2014}
among other works, where it is described as the case where
$\P(\hp)$ has constant eigenvectors (independent of $\hp$). The basis is
called an $\mathcal{A}$gPC basis as described in
\cite{gersterEntropiesSymmetrizationHyperbolic2020}.

\subsection{The lack of associativity of the pseudospectral product}
\label{subsec:ps_product}

One of the most widely used truncated products in Stochastic
Galerkin methods is the so-called pseudospectral product
\cite{benderEntropyConservativeDiscontinuousGalerkin2024,
daiHyperbolicitypreservingWellbalancedStochastic2021,
daiHyperbolicitypreservingWellbalancedStochastic2022,
debusschereNumericalChallengesUse2004,
gersterHyperbolicStochasticGalerkin2019,
jinStudyHyperbolicityKinetic2019,
lemaı̂treUncertaintyPropagationUsing2004,
xiuEfficientStochasticGalerkin2009},
which we denote by $\ps$. For two functions $p,q\in \PK$, it is defined as
\begin{equation}
p\ps q := \Pi_K(pq) =\sum_{k=0}^K c_k \phi_k,\qquad c_k=\langle
pq,\phi_k\rangle_{w},\quad k=0,1,\dots,K.
\label{eqn:def_ps}
\end{equation}
By construction, $p\ps q$ is the best approximation to
$pq$ in the norm $\norm{\cdot}_{w,\Xi}$ among all elements of $\PK$. It follows immediately from
the definition that the pseudospectral product is
a symmetric truncated product in the sense of
\cref{def:truncated_product}, and it is relatively easy to show
that this truncated product is positive definite (see, for instance, \cite[Theorem 3.2]{daiHyperbolicitypreservingWellbalancedStochastic2021}).
However, the pseudospectral product is not associative on polynomial spaces as the following lemma shows. 
\begin{lemma}
\label{lem:ps_not_associative}
Let $\Xi\subset \mathbb{R}$ and let $\PK=P_K(\Xi)$ with $K\ge 2$.
Then $\ps$ is not associative on $\PK$.
\end{lemma}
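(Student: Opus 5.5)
We exhibit an explicit triple violating associativity, namely $p=q=\phi_1$ and $r=\phi_K$. Throughout, $\{\phi_k\}$ is the orthonormal polynomial basis with $\deg\phi_k=k$ and positive leading coefficients $a_k>0$, which we may assume after changing signs. We assume the measure $\mu$ has infinitely many points of support, so that the basis exists for every degree. The plan is to compare the coefficient of $\phi_K$ in $(\phi_1\ps\phi_1)\ps\phi_K$ with that in $\phi_1\ps(\phi_1\ps\phi_K)$, and show they differ.

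First compute $\phi_1\ps\phi_K$. By the three-term recurrence, $\xi\phi_K$ is a combination of $\phi_{K+1},\phi_K,\phi_{K-1}$. Hence $\phi_1\phi_K=\alpha\phi_{K+1}+\beta\phi_K+\gamma\phi_{K-1}$ with $\alpha=a_1a_K/a_{K+1}>0$. Projecting drops the $\phi_{K+1}$ term, so
\[
\phi_1\ps\phi_K=\phi_1\phi_K-\alpha\phi_{K+1}.
\]
Since $\phi_1\in P_K$ and $K\ge 2$, the product $\phi_1\phi_1$ has degree $2\le K$. Therefore $\phi_1\ps\phi_1=\phi_1^2$, and the left side is $(\phi_1\ps\phi_1)\ps\phi_K=\Pi_K(\phi_1^2\phi_K)$. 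The right side is
\[
\phi_1\ps(\phi_1\ps\phi_K)=\Pi_K\bigl(\phi_1^2\phi_K\bigr)-\alpha\,\Pi_K(\phi_1\phi_{K+1}).
\]
Thus the difference between the two sides equals $\alpha\,\Pi_K(\phi_1\phi_{K+1})$.

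It remains to show that $\Pi_K(\phi_1\phi_{K+1})\neq 0$. Its $\phi_K$-coefficient is $\langle\phi_1\phi_{K+1},\phi_K\rangle_w=\langle\phi_{K+1},\phi_1\phi_K\rangle_w=\alpha>0$, using the expansion of $\phi_1\phi_K$ above. Hence the difference has $\phi_K$-coefficient $\alpha^2\neq0$, and $\ps$ is not associative.

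The only delicate point is the step $\phi_1\ps\phi_1=\phi_1^2$, which is exactly where $K\ge2$ is used. For $K=1$, indeed, the space $P_1$ with the pseudospectral product is commutative and two-dimensional, hence associative. The rest of the argument only needs positivity of the leading coefficients, and $\alpha\neq 0$ alone would suffice.
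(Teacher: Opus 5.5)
Your proof is correct and is essentially the paper's argument. It uses the same triple $(\phi_1,\phi_1,\phi_K)$, with the recurrence showing the two sides differ only by a multiple of $\phi_K$. Your one improvement is in identifying that multiple: you get $\alpha^2=\langle\phi_1\phi_K,\phi_{K+1}\rangle_w^2>0$ from self-adjointness of multiplication by $\phi_1$. The paper instead gets $-c_{K+2}/(a_{K+1}a_{K+2})$ and cites $a_k,c_k\neq0$, so your route makes the nonvanishing self-evident.

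A small caveat on your closing aside: for $K=1$, associativity follows from commutativity together with the unit $\phi_0=1$. Commutativity and two-dimensionality alone do not suffice, since commutative two-dimensional algebras need not be associative in general.
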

A proof is given in \cref{sec:appx_proofs}. This lack of associativity of the 
pseudospectral product is already known in the literature
\cite{debusschereNumericalChallengesUse2004,
gersterStabilizationUncertaintyQuantification2020,
sullivanIntroductionUncertaintyQuantification2015},
and it is central to the issue observed in
\cite{jinStudyHyperbolicityKinetic2019} when studying the
hyperbolicity of a certain SG formulation for the isothermal Euler
equations, among other conservation laws such as the isentropic
Euler equations and the general Euler equations. However, it is worth noting 
that the pseudospectral product can be associative
on other finite-dimensional spaces.
For instance, the Haar wavelet basis functions \cite{haarZurTheorieOrthogonalen1910} are
used the Stochastic Galerkin community for at least a decade with
great success
\cite{benderEntropyConservativeDiscontinuousGalerkin2024,
gersterHyperbolicStochasticGalerkin2019,
gersterHaartypeStochasticGalerkin2025,
petterssonStochasticGalerkinMethod2014}.

\subsection{Associative truncated products on
\texorpdfstring{$P_K$}{PK} in one dimension}
\label{subsec:associative_truncated_products_d1}

In this section, we propose an alternative
approach by studying the general class of associative truncated
products (ATPs). We now restrict to the one-dimensional setting
$\Xi\subset\mathbb{R}$ and $\PK=P_K(\Xi)$. In this setting, the orthonormal basis
$\{\phi_k\}_{k=0}^K$
satisfies a three-term recurrence relation of the form
\cite{szegőOrthogonalPolynomials1939}
\begin{equation}
\label{eqn:three_term_recurrence}
\phi_{k}=(a_k \phi_1 +b_k)\phi_{k-1}+c_k \phi_{k-2},\quad k\ge
2.
\end{equation}
The most important observation in this section is
that, in one dimension, every ATP is fully determined
by the single element $\phi_1\ast \phi_K$. 
Furthermore, an associative and symmetric truncated product
is fully determined by one scalar parameter, and the 
hyperbolicity and positivity properties of an ATP can be reduced 
to corresponding properties of the matrix $\M^1$.

\begin{theorem}
\label{thm:truncated_product_char}
Let $\Xi\subset \mathbb{R}$ and let $\ast$ be an ATP  
on $P_K(\Xi)$, then $\ast$ is uniquely determined by the element
$\phi_1\ast \phi_K.$
\end{theorem}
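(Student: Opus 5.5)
The plan is to show that all products $\phi_k\ast\phi_\ell$ with $1\le k,\ell\le K$ can be computed from $\phi_1\ast\phi_K$ using only bilinearity, commutativity, consistency and associativity. First, consistency settles every product of total degree at most $K$: if $k+\ell\le K$ then $\phi_k\phi_\ell\in P_K$, so $\phi_k\ast\phi_\ell=\phi_k\phi_\ell$. The same applies to $\phi_0\ast p=p$ and to $\phi_1\ast\phi_k=\phi_1\phi_k$ for $k\le K-1$. Hence the matrix $\M^1$ is completely fixed once $\phi_1\ast\phi_K$ is given. Concretely, its first $K$ columns are the classical multiplication-by-$\phi_1$ columns (tridiagonal Jacobi entries), and its last column is $\widehat{\phi_1\ast\phi_K}$.

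Next, I will use the three-term recurrence \eqref{eqn:three_term_recurrence} to express each $\phi_k$ as a polynomial $\phi_k=Q_k(\phi_1)$ of degree $k$ in $\phi_1$ under the ordinary product. I then claim that $\phi_k=Q_k^{\ast}(\phi_1)$, where $Q_k^\ast$ means the same polynomial evaluated with $\ast$-powers of $\phi_1$. By associativity, such $\ast$-powers are unambiguous. The argument is by induction on $k\le K$, using
\[
\phi_k = a_k\,\phi_1\phi_{k-1}+b_k\phi_{k-1}+c_k\phi_{k-2} = a_k\,\phi_1\ast\phi_{k-1}+b_k\phi_{k-1}+c_k\phi_{k-2},
\]
where the second equality holds by consistency because $\deg(\phi_1\phi_{k-1})=k\le K$. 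In matrix form, associativity gives $\M^k=\P(\hat\phi_k)=Q_k(\M^1)$ as a matrix polynomial. This follows from \cref{lem:associativity_PpPq}, $\P(\P(\hp)\hq)=\P(\hp)\P(\hq)$, combined with linearity of $\P$ and $\P(\hat\phi_1)=\M^1$, $\P(\hat\phi_0)=I$; the last identity holds since $\phi_0\ast p=p$ by consistency. Thus every $\M^k$ is a polynomial in $\M^1$, and $\M^1$ is determined by $\phi_1\ast\phi_K$, so all structure constants are determined.

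The step I expect to need the most care is justifying $\P(\hat\phi_0)=I$ and the inductive identity $\M^k=a_k\M^1\M^{k-1}+b_k\M^{k-1}+c_k\M^{k-2}$. For the latter, I would apply \cref{lem:associativity_PpPq} with $\hp=\hat\phi_1$ and $\hq=\hat\phi_{k-1}$, noting that $\P(\hat\phi_1)\hat\phi_{k-1}=\widehat{\phi_1\ast\phi_{k-1}}=\widehat{\phi_1\phi_{k-1}}$. Uniqueness then follows: if two ATPs agree on $\phi_1\ast\phi_K$, they have the same $\M^1$, hence the same $\M^k$ for every $k$, and so the same product by \eqref{eqn:pq_M_form}.
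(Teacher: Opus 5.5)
Your proposal is correct and is essentially the paper's argument. The paper runs the same strong induction on $k$ for the products $p\ast\phi_k$: it uses the three-term recurrence with $\phi_1\phi_{k-1}=\phi_1\ast\phi_{k-1}$ together with associativity and commutativity, and right after the proof it records exactly your matrix recurrence $\M^k=a_k\M^1\M^{k-1}+b_k\M^{k-1}+c_k\M^{k-2}$ as \eqref{eqn:Mk_rec_relation_} — your detour through $Q_k^\ast(\phi_1)$ is harmless but unnecessary, since $\phi_1^{\ast j}=\phi_1^j$ for $j\le K$ already follows from consistency.
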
 
\begin{proof}
From the consistency condition and our assumption,
we know that $\phi_1 \ast p=\phi_1 p$  for all $p\in P_{K-1}(\Xi)$. Now,
let us assume (for the sake of strong induction) that $\phi_k\ast p$
is well-defined for all $p\in P_{K}(\Xi)$ for all $k< k_0\le K$. 

Let $p\in P_K(\Xi)$, using the recurrence \eqref{eqn:three_term_recurrence} and the associativity, commutativity, distributivity properties of $\ast$, we have
\[
p\ast \phi_{k_0}=a_{k_0}\phi_1\ast \left(
p\ast \phi_{k_0-1}
\right)+b_{k_0}p\ast \phi_{k_0-1}+ c_{k_0}p \ast \phi_{k_0-2}.
\]
Hence, $p\ast \phi_{k_0}$ is well-defined. Since this holds for all
$k_0=2,\dots K$. We conclude that $p\ast \phi_k$ is well-defined for all $0\le
k\le K$.
Lastly, given $p,q\in P_K(\Xi)$, we can determine $p\ast q$ by expressing $q$
in terms of the basis $\{\phi_k\}_{k=0}^K$ and using distributivity.
\end{proof}

In other terms, the matrices $\{\M^k\}_{k=2}^K$ can be calculated directly from
$\M^0=I$ and $\M^1$, using the recurrence relation
\eqref{eqn:three_term_recurrence} in
the following way
\begin{equation}
\M^k=a_k \M^1\M^{k-1}+b_k\M^{k-1}+c_k\M^{k-2},\qquad k=2,3,\dots K.
\label{eqn:Mk_rec_relation}
\end{equation}
This observation allows us to deduce the following two lemmas.

\begin{lemma}
\label{lem:symmetry_of_M_k}
Let $\Xi\subset \R$ and let $\ast$ be an ATP on
$P_K(\Xi)$. Let
$\{\M^k\}_{k=0}^K$ be the associated matrices as defined in
\eqref{eqn:pq_M_form}. If $\M^1$ is symmetric, then $\M^k$ is symmetric for all
$0\le k\le K$.
\end{lemma}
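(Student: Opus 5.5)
The argument is a strong induction on $k$ along the recurrence \eqref{eqn:Mk_rec_relation}, combined with the commutativity of the family $\{\M^k\}_{k=0}^K$ that comes from associativity. The base cases are immediate: $\M^0=I$ is symmetric because $\phi_0\equiv 1$ and consistency give $\phi_0\ast\phi_j=\phi_j$, so $\M^0_{i+1,j+1}=\langle\phi_i,\phi_j\rangle_w=\delta_{ij}$. The matrix $\M^1$ is symmetric by hypothesis.

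For the inductive step, fix $2\le k\le K$ and assume $\M^{k-1}$ and $\M^{k-2}$ are symmetric. By \eqref{eqn:Mk_rec_relation},
\begin{equation*}
(\M^k)^T = a_k (\M^{k-1})^T(\M^1)^T + b_k(\M^{k-1})^T + c_k(\M^{k-2})^T = a_k \M^{k-1}\M^1 + b_k\M^{k-1}+c_k\M^{k-2}.
\end{equation*}
The only term needing care is the product. Since $\ast$ is associative, the matrices $\{\M^k\}_{k=0}^K$ commute, as noted after the definitions of symmetry and associativity. In particular $\M^{k-1}\M^1=\M^1\M^{k-1}$, so the right-hand side equals $\M^k$, and $\M^k$ is symmetric.

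The only substantive point, mild as it is, is that a product of two symmetric matrices is symmetric only when they commute, which is exactly where associativity enters. If one prefers not to rely on the stated equivalence between associativity and commutation of the $\M^k$, one can check $\M^1\M^{k-1}=\M^{k-1}\M^1$ directly. Both matrices represent the linear maps $q\mapsto \phi_1\ast(\phi_{k-1}\ast q)$ and $q\mapsto\phi_{k-1}\ast(\phi_1\ast q)$, and by associativity and commutativity each equals $(\phi_1\ast\phi_{k-1})\ast q$. Having verified this, the induction closes for all $k\le K$.
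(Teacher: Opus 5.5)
Your proof is correct and follows essentially the paper's route: the paper deduces this lemma directly from the recurrence \eqref{eqn:Mk_rec_relation}, observing that every $\M^k$ is a polynomial in $\M^1$ and is therefore symmetric whenever $\M^1$ is. Your induction makes the same argument explicit, and the commutation $\M^1\M^{k-1}=\M^{k-1}\M^1$ that you justify via associativity also follows automatically from $\M^{k-1}$ being a polynomial in $\M^1$ (associativity having already been used to derive the recurrence).
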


\cref{thm:truncated_product_char} reveals that an
ATP is determined by $K+1$ coefficients of $\phi_1\ast\phi_K$, which correspond to the last
column of $\M^1$. If the ATP is assumed to be symmetric, the
number of parameters defining the product is reduced to a single scalar. 

\begin{lemma}\label{lem:uniqueness_AS_one_d}
Let $\Xi\subset \mathbb{R}$ and let $\ast$ be a symmetric and
associative product on $P_K(\Xi)$, then $\ast$ is uniquely determined by the scalar 
$
\langle\phi_1\ast \phi_K,\phi_K \rangle_{w}.
$
\end{lemma}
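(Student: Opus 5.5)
By \cref{thm:truncated_product_char}, it suffices to show that the element $\phi_1\ast\phi_K$ is determined by the scalar $\langle\phi_1\ast\phi_K,\phi_K\rangle_w$. Equivalently, we show that every coefficient $\langle \phi_1\ast\phi_K,\phi_i\rangle_w$, $0\le i\le K$, is fixed. These coefficients are the entries $\M^1_{i+1,K+1}$ of the last column of $\M^1$.

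\textbf{The first $K$ rows.} For $0\le i\le K-1$, symmetry of $\ast$ gives
\[
\langle \phi_1\ast\phi_K,\phi_i\rangle_w=\langle \phi_K,\phi_1\ast\phi_i\rangle_w .
\]
Since $\phi_1\phi_i\in P_K(\Xi)$ for $i\le K-1$, consistency yields $\phi_1\ast\phi_i=\phi_1\phi_i$. Hence this coefficient equals $\langle \phi_K,\phi_1\phi_i\rangle_w$, which is a fixed quantity depending only on the orthonormal basis.

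We can identify it explicitly through the three-term recurrence \eqref{eqn:three_term_recurrence}. The coefficient is zero for $i<K-1$. For $i=K-1$, it equals $1/a_K$, as follows from writing $\phi_1\phi_{K-1}=(\phi_K-b_K\phi_{K-1}-c_K\phi_{K-2})/a_K$. The explicit value is not actually needed; what matters is that it is determined independently of $\ast$.

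\textbf{The last row.} The only remaining entry is $i=K$, namely $\langle\phi_1\ast\phi_K,\phi_K\rangle_w$, which is exactly the given scalar. Therefore $\phi_1\ast\phi_K$ is completely determined, the full matrix $\M^1$ is determined, and the higher matrices $\M^k$ then follow from \eqref{eqn:Mk_rec_relation}. This proves the claim.

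\textbf{Where care is needed.} The argument contains no real obstacle. The one point to check is that consistency applies, that is, $\phi_1\phi_i\in P_K$ whenever $i\le K-1$. This holds because $\deg(\phi_1\phi_i)=i+1\le K$. The only case consistency cannot reach is $\phi_1\ast\phi_K$ paired against $\phi_K$, and that is precisely why a single free scalar remains.
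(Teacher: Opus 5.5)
Your proof is correct and essentially matches the paper's argument. Symmetry and consistency fix $\langle \phi_1\ast\phi_K,\phi_i\rangle_w=\langle \phi_1\phi_i,\phi_K\rangle_w$ for $0\le i\le K-1$, which leaves only the diagonal scalar free, and \cref{thm:truncated_product_char} then determines $\ast$. The paper also cites \cref{lem:symmetry_of_M_k} to note that the product generated from this $\M^1$ is indeed symmetric; this is not needed for uniqueness (and your aside $1/a_K$ only makes sense for $K\ge 2$, where the recurrence is defined, but as you say it is not used).
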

\begin{proof}
From the symmetry property of $\ast$, we know that
$\langle \phi_1\ast \phi_K,\phi_j\rangle_{w}=
\langle\phi_1\ast \phi_j,\phi_{K}\rangle_w,
$
which is already determined by the consistency property for all $0\le j\le K-1$.
This leaves only $\langle \phi_1\ast \phi_K,\phi_K\rangle$ as a free
parameter in
$\phi_1\ast \phi_K$. Hence, this scalar quantity determines $\M^1$, and
consequently determines $\ast$ by \cref{thm:truncated_product_char}.
Furthermore, \cref{lem:symmetry_of_M_k} guarantees that the matrices
$\{\M^k\}_{k=1}^{K}$ is symmetric
if $\M^1$ is symmetric. Hence, the ATP generated
from the three term recursion is symmetric.
\end{proof}

\begin{lemma}\label{lem:Results_about_ATP}
Let $\Xi\subset \mathbb{R}$ and let $\ast$ be an ATP on $P_K(\Xi)$. Then the following hold:
\begin{enumerate}[label=(\alph*)]
\item  If $\M^1$ is hyperbolic (diagonalizable with real eigenvalues), then
$\ast$ is uniformly hyperbolic.\label{lem:uniform_hyperbolicity}

\item If $\M^1$ is diagonalizable with eigenvalues
$\phi_1(\lambda_0),\phi_1(\lambda_1),\dots,\phi_1(\lambda_{K})$.
Then, $\P(\hp)$ is diagonalizable with
eigenvalues $p(\lambda_0),p(\lambda_1),\dots p(\lambda_K)$.
\label{lem:eigvals_M1_eigvals_P}

\item  If $\M^1$ is diagonalizable with eigenvalues
$\{\phi_1(\lambda_i)\}_{i=0}^K$ and $\lambda_i\in\Xi$, 
then for every positive $p\in P_K(\Xi)$, all eigenvalues of $\P(\hp)$ are positive.
\label{lem:positivity_of_as_products}
\item If $\zeta$ is a root of $\phi_1\ast \phi_K-\phi_1\phi_K$, 
then $\phi_1(\zeta)$ is an eigenvalue of $\M^1$.
\label{lem:roots_characterization}
\end{enumerate}
\end{lemma}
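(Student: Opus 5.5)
The unifying observation behind all four parts is that, for an ATP, each $\M^k$ is a polynomial in $\M^1$. Concretely, the recurrence \eqref{eqn:Mk_rec_relation} together with $\M^0=I$ gives $\M^k=\phi_k(\M^1)$ by induction on $k$. Here $\phi_k(\cdot)$ denotes the scalar polynomial $\phi_k$ evaluated in the sense
\[
\phi_k(\lambda) = \text{polynomial in } \phi_1(\lambda),
\]
obtained from \eqref{eqn:three_term_recurrence}. It is cleanest to write $\phi_k=\psi_k(\phi_1)$ with $\psi_k$ a polynomial of degree $k$, which is possible because $\phi_1$ is affine in $\xi$. Then $\M^k=\psi_k(\M^1)$, and hence
\[
\P(\hp)=\sum_k \hp_k\psi_k(\M^1)=\tilde p(\M^1), \qquad p=\tilde p\circ\phi_1 .
\]
I would establish this identity first and then derive each part from it.

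For (a), suppose $V^{-1}\M^1V=D$ is diagonal and real. Then $V^{-1}\P(\hp)V=\tilde p(D)$, which is diagonal and real for every $\hp$, with the same $V$. This is exactly uniform hyperbolicity. For (b), write $D=\diag(\phi_1(\lambda_i))$. Then $\tilde p(D)=\diag(\tilde p(\phi_1(\lambda_i)))=\diag(p(\lambda_i))$, which gives the stated eigenvalues. For (c), combine (b) with $\lambda_i\in\Xi$: the eigenvalues are $p(\lambda_i)>0$. The only care needed in (a)–(c) is that $\phi_1$ is affine with nonzero slope, so that $\phi_1(\lambda)$ determines $\lambda$ and $\tilde p$ is well defined. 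In (a) the eigenvalues need not be written as $\phi_1(\lambda_i)$ at all; one simply uses that any real number is $\phi_1$ of some real $\lambda$.

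For (d), I would exhibit an eigenvector of $(\M^1)^T$, that is, a left eigenvector of $\M^1$. Take the evaluation vector $e_\zeta=(\phi_0(\zeta),\dots,\phi_K(\zeta))^T$, so that $e_\zeta^T\hq=q(\zeta)$ for every $q\in P_K$. Then $e_\zeta^T\M^1\hq$ is the value at $\zeta$ of $\phi_1\ast q$. Write $q=q'+\hq_K\phi_K$ with $q'\in P_{K-1}$. Consistency gives $\phi_1\ast q'=\phi_1q'$. By hypothesis $(\phi_1\ast\phi_K)(\zeta)=\phi_1(\zeta)\phi_K(\zeta)$. Hence
\[
(\phi_1\ast q)(\zeta)=\phi_1(\zeta)\,q(\zeta)=\phi_1(\zeta)\,e_\zeta^T\hq .
\]
This shows $e_\zeta^T\M^1=\phi_1(\zeta)e_\zeta^T$, and $e_\zeta\neq 0$ because $\phi_0=1$. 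Therefore $\phi_1(\zeta)$ is an eigenvalue of $\M^1$.

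The main obstacle is the bookkeeping in the identity $\M^k=\psi_k(\M^1)$: it must be matched exactly with the scalar recurrence, including the constant term $b_k$ multiplying $\M^{k-1}$. Once that identity is secured, parts (a)–(c) are one line each, and (d) is independent of it.
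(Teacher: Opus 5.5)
Your proposal is correct and follows the paper's own proof: the paper also gets (a)--(c) from the fact that every $\M^k$ is a polynomial in $\M^1$ via \eqref{eqn:Mk_rec_relation}, and proves (d) with the same left eigenvector $(\phi_0(\zeta),\dots,\phi_K(\zeta))$. Your identity $\P(\hp)=\tilde p(\M^1)$ with $p=\tilde p\circ\phi_1$, and your remark that $e_\zeta\neq 0$ because $\phi_0=1$, simply make explicit two steps the paper states as immediate.
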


\begin{proof}
The proof of \ref{lem:uniform_hyperbolicity} and
\ref{lem:eigvals_M1_eigvals_P} follows directly from the recurrence relation
\eqref{eqn:Mk_rec_relation} and \eqref{eqn:three_term_recurrence} since all
matrices $\{\M^k\}_{k=0}^K$ are polynomials in $\M^1$. The proof of
\ref{lem:positivity_of_as_products} follows directly from
\ref{lem:eigvals_M1_eigvals_P} since the eigenvalues of $\P(\hp)$ are given by
$p(\lambda_i)$ for $i=0,1,\dots K$, and $p(\lambda_i)>0$ for all $i$ since $p>0$ on $\Xi$. 
Lastly, let $\zeta$ be a root of $\phi_1\ast
\phi_K-\phi_1\phi_K$, and let $\mathbf{v}$ be the row vector
$(\phi_0(\zeta),\phi_1(\zeta),\cdots,\phi_K(\zeta))$, then by the definition
of $\M^1$,
\[
\mathbf{v}\M^1
=
\begin{bmatrix}
\phi_1(\zeta)\phi_0(\zeta)&
\phi_1(\zeta)\phi_1(\zeta)&
\cdots &
\phi_{1}(\zeta)\phi_{K-1}(\zeta)&
(\phi_1\ast\phi_K)(\zeta)
\end{bmatrix}
=
\phi_1(\zeta)\mathbf{v},
\]
since $\phi_1\ast \phi_K(\zeta)=\phi_1(\zeta)\phi_K(\zeta)$. Hence, $\mathbf{v}$
is a left eigenvector of $\M^1$ with eigenvalue $\phi_1(\zeta)$, which proves
\ref{lem:roots_characterization}.
\end{proof}

An immediate consequence of this lemma is that a truncated product is uniformly
hyperbolic if it is associative and $\phi_1 \phi_K-\phi_1\ast \phi_K$ has $K+1$
distinct roots. This allows us to give a complete characterization of the
eigendecomposition of $\P(\hp)$ as the following theorem shows.
\begin{theorem}
  Let $\ast$ be an ATP on $P_K(\Xi)$, and assume that
$\phi_1\ast\phi_K-\phi_1\phi_K$ has $K+1$ distinct real roots
$\xi_0,\dots,\xi_K$. Define the matrix $V\in\mathbb{R}^{(K+1)\times(K+1)}$ by
$V_{i+1,j+1}:=\phi_j(\xi_i)$.
Then the following statements hold.

\begin{enumerate}[label=(\alph*)]
\item The (left) eigendecomposition of $P(\hp)$ is given by
\[
\P(\hp) = V^{-1} \operatorname{diag}\left(p(\xi_0), p(\xi_1), \dots,
p(\xi_{K})\right) V,\quad \forall \hp\in \mathbb{R}^{K+1}.
\]

\item For all $p,q\in P_K(\Xi)$, we have $(p\ast q)(\xi_i)=p(\xi_i)q(\xi_i)$, 
$i=0,1,\dots,K$.

\item If, in addition, $\xi_0,\dots,\xi_K\in\Xi$,
then $p\ast q$ is the (unique) interpolant of $pq$ at these nodes $\xi_0,\dots,\xi_K$.
\end{enumerate}
\label{thm:associative_hyperbolic_characterization}
\end{theorem}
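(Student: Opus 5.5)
The plan is to build everything on the left-eigenvector computation in \cref{lem:Results_about_ATP}\ref{lem:roots_characterization}, extended from $\M^1$ to every $\M^k$ and then to $\P(\hp)$. For each root $\xi_i$ let $\mathbf{v}_i=(\phi_0(\xi_i),\dots,\phi_K(\xi_i))$, which is the $(i+1)$-st row of $V$. By that lemma, $\mathbf{v}_i\M^1=\phi_1(\xi_i)\mathbf{v}_i$.

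First, we show that $V$ is invertible. If $V\mathbf{c}=0$, then the polynomial $\sum_j c_j\phi_j\in P_K$ vanishes at the $K+1$ distinct points $\xi_i$, so it is zero and $\mathbf{c}=0$.

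Second, we prove by induction on $k$, using the recurrence \eqref{eqn:Mk_rec_relation}, that $\mathbf{v}_i\M^k=\phi_k(\xi_i)\mathbf{v}_i$. The cases $k=0,1$ are already known. For the inductive step,
\[
\mathbf{v}_i\M^k=a_k\phi_1(\xi_i)\phi_{k-1}(\xi_i)\mathbf{v}_i+b_k\phi_{k-1}(\xi_i)\mathbf{v}_i+c_k\phi_{k-2}(\xi_i)\mathbf{v}_i,
\]
and by \eqref{eqn:three_term_recurrence} evaluated at $\xi_i$ the coefficient equals $\phi_k(\xi_i)$. Note that $\M^1$ and $\M^{k-1}$ commute here, but the right-multiplication order is consistent anyway. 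Summing with weights $\hp_k$ gives $\mathbf{v}_i\P(\hp)=p(\xi_i)\mathbf{v}_i$. Stacking these rows yields $V\P(\hp)=\diag(p(\xi_0),\dots,p(\xi_K))V$, and invertibility of $V$ gives (a).

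For (b), note that $(V\hat r)_i=r(\xi_i)$ for any $r\in P_K$. Then
\[
(p\ast q)(\xi_i)=(V\P(\hp)\hq)_i=p(\xi_i)(V\hq)_i=p(\xi_i)q(\xi_i).
\]
For (c), $p\ast q\in P_K$ agrees with $pq$ at $K+1$ distinct nodes, so it is the unique degree-$\le K$ interpolant. Membership of the nodes in $\Xi$ is needed only so that the statement makes sense as interpolation on $\Xi$.

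The main subtle point is the induction step, where we must make sure the eigenvector identity for $\M^k$ is obtained with the correct multiplication side (left eigenvectors with $\M^1\M^{k-1}$). Beyond that, the argument relies only on invertibility of $V$, which comes from distinctness of the roots.
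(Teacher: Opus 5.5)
Your proof is correct and follows the paper's argument. You take the left eigenvectors $\mathbf{v}_i$ of $\M^1$ from \cref{lem:Results_about_ATP}\ref{lem:roots_characterization}, extend them to every $\M^k$ and hence to $\P(\hp)$, and then use the same evaluation identity $V\widehat{r}=(r(\xi_i))_i$ for (b) and (c). The differences are minor: you write out the induction that the paper leaves to \cref{lem:Results_about_ATP}\ref{lem:eigvals_M1_eigvals_P}, and you obtain invertibility of $V$ from unisolvence of interpolation at $K+1$ distinct nodes rather than from the distinctness of the eigenvalues $\phi_1(\xi_i)$.
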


\begin{proof}
By \cref{lem:Results_about_ATP}\ref{lem:roots_characterization}, each
$\phi_1(\xi_i)$ is an eigenvalue of $\M^1$. Since the roots $\xi_0,\dots,\xi_K$
are distinct, the rows of $V$ are
linearly independent left eigenvectors of $\M^1$, and therefore $V$ is invertible.
Now let $p\in P_K(\Xi)$ with coefficient vector $\hp$. By
\cref{lem:Results_about_ATP}\ref{lem:eigvals_M1_eigvals_P}, the $i$th row of
$V$ is a left eigenvector of $\P(\hp)$ associated with the eigenvalue
$p(\xi_i)$, which proves part (a).

To prove the second claim, we observe that
$V\hp=\left(p(\xi_0),p(\xi_1),\dots,p(\xi_K)\right)^T$. Then
\[
\begin{bmatrix}
(p\ast q)(\xi_0)\\
\vdots \\
(p\ast q)(\xi_K)
\end{bmatrix}
=V\widehat{p\ast q} = V \P(\hp) \hq
= \operatorname{diag}\left(p(\xi_0), \dots, p(\xi_{K})\right)V\hq
=
\begin{bmatrix}
p(\xi_0)q(\xi_0)\\
\vdots \\
p(\xi_K)q(\xi_K)
\end{bmatrix}.
\]
Finally, if the nodes $\xi_0,\dots,\xi_K$ belong to $\Xi$, then $p\ast q$ is a
polynomial in $P_K(\Xi)$ that agrees with $pq$ at $K+1$ distinct points.
Therefore, it is the unique interpolant of $pq$ at these nodes.
\end{proof}

The theorem above shows that if $\phi_1\ast \phi_K -\phi_1\phi_K$ has $K+1$
distinct real roots in $\Xi$, then the ATP is precisely the
interpolant of $pq$ at these roots. In the next subsection, we will discuss
this idea in more detail and interpret such products as collocation products. In
\cref{sec:decouplability_and_collocation}, we will expand on this idea further
and show that under some conditions such as the absence of a source term, the SG
method can be rewritten as a collocation method at the roots of
$\phi_1\ast \phi_K -\phi_1\phi_K$.
Conversely, given a collocation method with $K+1$ distinct
collocation points, we can construct an ATP  
such that the SG method is precisely this collocation method.
Lastly, it is worth noting that the
aforementioned roots may not be real, may
not be distinct, or may not lie in $\Xi$. In such cases, the ATP
cannot be interpreted as an interpolation product on $\Xi$.

\subsection{Examples of associative truncated products}
\label{sec:examples_associative_products}
As discussed before, every choice of $\phi_1\ast \phi_K$ corresponds to a unique
ATP. In this section, we will discuss a few ATPs and their properties:
\begin{enumerate}[label=(\alph*)]
\item The {\it zero product} $\pzer$  defined by choosing $\phi_1\ast \phi_K=0$.
This product is not symmetric since $\M^1$ is not symmetric. 
Moreover, by \cref{lem:Results_about_ATP}\ref{lem:roots_characterization}, 
the eigenvalues of $\M^1$ are of the form
$\phi_1(\zeta)$, where $\zeta$ is a root of $\phi_1 \phi_K$.
However, this does not imply that it is hyperbolic since $\phi_1\phi_K$ may
have double roots, as
is the case with Legendre polynomials for odd values of $K$,  and it is possible
that some of these eigenvalues are defective.
\item The {\it $L^\infty$ product} $\pinf$ defined by choosing $\phi_1\ast
\phi_K$ to be the closest polynomial
in $P_K(\Xi)$ to $\phi_1\phi_K$ in the $\norm{\cdot}_{\infty}$ assuming that
$\Xi$ is compact. This polynomial can be
obtained numerically using the Remez algorithm
\cite{powellApproximationTheoryMethods2001}. By the
equioscillation theorem, $\phi_1\pinf\phi_K-\phi_1\phi_K$ has $K+1$ distinct
roots in $\Xi$, which implies that it is hyperbolic by
\cref{lem:Results_about_ATP}\ref{lem:roots_characterization}.
Furthermore, if $p>0$ on $\Xi$, then
$\P(\hp)$ will have positive eigenvalues by
\cref{lem:Results_about_ATP}\ref{lem:positivity_of_as_products}. In general, since
$\M^1$ is not symmetric, the $L^\infty$ product is not symmetric.

\item The {\it collocation product} $\co$ defined by choosing $\phi_1\ast
\phi_K$ to be the interpolant of $\phi_1\phi_K$ at a given set of distinct
points $\{\xi_\ell\}_{\ell=0}^K$. Using induction and the three term
recurrence relation, we can show that $(\phi_i\co
\phi_j)(\xi_\ell)=\phi_i(\xi_\ell)\phi_j(\xi_\ell)$ for all
$0\le i,j,\ell\le K$. Furthermore, the eigenvalues of $\M^1$ are precisely
$\{\phi_1(\xi_\ell)\}_{\ell=0}^K$, which implies that the
collocation product is
hyperbolic. In general, the collocation product is not symmetric.

\item  The {\it associative and symmetric (AS) product} $\as$ defined by choosing
$\phi_1\ast \phi_K$ to be the interpolant of $\phi_1\phi_K$ at the $K+1$ roots of
$\phi_{K+1}$, which are the nodes of the Gaussian quadrature rule associated
with $w$ and $\Xi$. By
\cref{thm:associative_hyperbolic_characterization}, it is associative and
hyperbolic.  
We next show that it is also symmetric.
\end{enumerate}

\begin{lemma}\label{lem:AS_product_is_ps_M1}
Let $\as$ be the truncated product defined by interpolation at the roots of
$\phi_{K+1}$. Then
\begin{equation}
\phi_1\as \phi_K=\phi_1\ps \phi_K.
\label{eqn:AS_product_is_ps_M1}
\end{equation}
Hence, the AS product is associative and symmetric.
\end{lemma}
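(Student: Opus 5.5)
The identity \eqref{eqn:AS_product_is_ps_M1} is an equality in $P_K(\Xi)$, so we will check it coefficient by coefficient. Let $g:=\phi_1\phi_K\in P_{K+1}(\Xi)$. By definition, $\phi_1\ps\phi_K=\Pi_K g$. Also by definition, $\phi_1\as\phi_K$ is the interpolant $I_K g$ of $g$ at the $K+1$ roots $\xi_0,\dots,\xi_K$ of $\phi_{K+1}$.

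The main step is to show $I_K g=\Pi_K g$ for every $g\in P_{K+1}(\Xi)$. Write $g=\Pi_K g+\hat g_{K+1}\phi_{K+1}$; this is possible because $g$ has degree at most $K+1$ and $\{\phi_k\}$ is orthonormal. Since $\phi_{K+1}(\xi_\ell)=0$ for all $\ell$, the polynomial $\Pi_K g\in P_K$ agrees with $g$ at the $K+1$ distinct nodes. Uniqueness of the interpolant in $P_K$ then gives $I_K g=\Pi_K g$. Here we use the standard fact that the roots of $\phi_{K+1}$ are real, simple and lie in $\Xi$, which are the Gaussian nodes.

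In particular $\phi_1\as\phi_K=\phi_1\ps\phi_K$. For $0\le k\le K-1$ we have $\phi_1\phi_k\in P_K$, so consistency of both products gives $\phi_1\as\phi_k=\phi_1\phi_k=\phi_1\ps\phi_k$. Hence the matrix $\M^1$ of $\as$ coincides with that of $\ps$. The pseudospectral product is symmetric, so $\M^1$ is symmetric.

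Associativity of $\as$ is given by \cref{thm:associative_hyperbolic_characterization}; more precisely, $\as$ is by construction the ATP generated from this $\phi_1\ast\phi_K$ through \eqref{eqn:Mk_rec_relation}. Since $\M^1$ is symmetric, \cref{lem:symmetry_of_M_k} shows that every $\M^k$ is symmetric, so $\as$ is symmetric. No step is a real obstacle. The only point that needs care is the decomposition $g=\Pi_K g+\hat g_{K+1}\phi_{K+1}$, which requires $\deg g\le K+1$; this is exactly why the argument works for $\phi_1\phi_K$ but would not work for general products $\phi_k\phi_\ell$.
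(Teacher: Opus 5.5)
Your proof is correct, but for the key identity $\phi_1\as\phi_K=\phi_1\ps\phi_K$ you take a different route from the paper.

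The paper computes the coefficients $\langle\phi_1\as\phi_K,\phi_i\rangle_w$ with the $(K+1)$-point Gauss rule. Because the rule is exact up to degree $2K+1$, both $(\phi_1\as\phi_K)\phi_i$ and $\phi_1\phi_K\phi_i$ can be replaced by their nodal sums, and these agree because $\phi_1\as\phi_K$ interpolates $\phi_1\phi_K$ at the nodes.

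You instead write $\phi_1\phi_K=\Pi_K(\phi_1\phi_K)+\hat g_{K+1}\phi_{K+1}$. You then use that $\phi_{K+1}$ vanishes at the nodes, together with uniqueness of interpolation in $P_K$. Your argument is more elementary:
\begin{itemize}
\item It needs neither the quadrature weights nor exactness. It is the same division-by-$\phi_{K+1}$ idea that underlies the proof of Gauss exactness, used only in the degree-$(K+1)$ case.
\item It uses only that the nodes are $K+1$ distinct zeros of $\phi_{K+1}$. Their location in $\Xi$ plays no role in this step.
\item It gives the cleaner statement that interpolation at the Gauss nodes and $L^2_w$ projection coincide on all of $P_{K+1}(\Xi)$.
\end{itemize}

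The quadrature route, on the other hand, extends beyond $\M^1$. Combined with \cref{thm:associative_hyperbolic_characterization}(b), it gives $\langle p\as q,r\rangle_w=\sum_{\ell}w_\ell\,p(\xi_\ell)q(\xi_\ell)r(\xi_\ell)$ for all $p,q,r\in P_K(\Xi)$. This shows the symmetry of $\as$ directly, without going through \cref{lem:symmetry_of_M_k}. It also shows that the entries $\M^k_{i+1,j+1}$ of the AS and pseudospectral products agree whenever $i+j+k\le 2K+1$.

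Your remaining steps match the paper: identifying $\M^1$ with the pseudospectral one via consistency for $k\le K-1$, then invoking \cref{lem:symmetry_of_M_k}. Your treatment of associativity as holding by construction is also the same as the paper's.
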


\begin{proof}
Let $\{((\xi_\ell,w_\ell)\}_{\ell=0}^K$ be the nodes and weights of the
Gaussian quadrature rule associated with $w$ and $\Xi$, and let $0\le i\le K$.
Since this Gaussian quadrature rule is exact on polynomials of degree up to
$2K+1$, we have
\begin{align*}
\langle \phi_1\as \phi_K,\phi_i \rangle_w &=
\sum_{\ell=0}^K w_\ell  (\phi_1\as \phi_K)(\xi_\ell)\phi_i(\xi_\ell) =
\sum_{\ell=0}^K w_\ell  \phi_1(\xi_\ell) \phi_K(\xi_\ell)\phi_i(\xi_\ell)\\ &=
\langle \phi_1\phi_K,\phi_i \rangle_w=
\langle \phi_1\ps \phi_K,\phi_i \rangle_w,
\end{align*}
which prove \eqref{eqn:AS_product_is_ps_M1} since the equality above
holds for all $0\le i\le K$.
Therefore, the AS product share the same $\M^1$ as the pseudospectral
product, which is symmetric. Hence, the AS product is associative (by
construction) and
symmetric by \cref{lem:symmetry_of_M_k}.
\end{proof}

\begin{remark}
\label{rem:AS_vs_PS}
Lemma \ref{lem:AS_product_is_ps_M1} shows that the products $\as$ and $\ps$
have the same matrix $\M^1$. In general, however, they do not coincide as
truncated products. Indeed, for the associative and symmetric product,
the higher-order matrices $\M^k$, $k\ge2$, are generated from $\M^1$ by the
recurrence \eqref{eqn:Mk_rec_relation}, which enforces associativity. By
contrast, for the pseudospectral product, the matrices $\M^k$ are defined
directly by projection of $\phi_k\phi_j$ onto $P_K(\Xi)$, and these matrices do
not in general satisfy the commutativity relations required for associativity.
Thus, $\as$ may be viewed as the ATP obtained by
retaining the same first-order multiplication matrix $\M^1$ as the
pseudospectral product while redefining the higher-order matrices so as to
restore associativity.
\end{remark}

We summarize the properties of the truncated products discussed
above in \cref{table:associative_products}. Here, the symbol `?'
indicates that
the property may or may not hold, depending on the distribution $w$ and the
degree of the polynomials $K$.
\begin{table}[htbp]
\centering
\begin{tabular}{|c|c|c|c|c|}
\hline
Product & Associative & Hyperbolic& Symmetric  & Pos. definite \\ \hline
Pseudospectral product &  \xmark&\cmark& \cmark & \cmark \\
\hline
Zero product  & \cmark & ? & \xmark & ? \\
\hline
$L^\infty$ product  &\cmark & \cmark & \xmark & \cmark \\
\hline
Collocation product  & \cmark & \cmark & ? &\cmark\\
\hline
AS product  &\cmark & \cmark & \cmark & \cmark \\
\hline
\end{tabular}
\caption{Some examples of truncated products on $P_K(\Xi)$ with
$\Xi\subset \mathbb{R}$.}
\label{table:associative_products}
\end{table}

\subsection{Convergence of truncated products to the usual product}
To quantify the approximation error introduced by a truncated product, we
consider a sequence of truncated products
$\ast_K : P_K(\Xi)\times P_K(\Xi)\to P_K(\Xi)$.
For each $K\ge 0$, we define $C_K^\ast$ as the smallest constant such that
\begin{equation}\norm{p\ast_K q}_{w,\Xi}\le C_K^\ast \norm{p}_{w,\Xi}
\norm{q}_{w,\Xi},\quad \forall p,q\in P_K(\Xi).
\label{eqn:truncated_product_bound}
\end{equation}
In other words, $C_K^\ast$ is the operator norm of the bilinear map
$(p,q)\mapsto p\ast_K q $.
Note that $C_K^\ast$ is finite since $P_K(\Xi)$ is finite dimensional. 
Now, we are ready to
state the following theorem about the convergence of
truncated products.
\begin{theorem}
\label{thm:truncated_product_accuracy}
Consider $f,g\in L^2_w(\Xi)$ and let $f_K,g_K\in P_K(\Xi)$ be the $L^2_w$ projections 
of $f$ and $g$ onto $P_K(\Xi)$, respectively. Assume that 
\begin{equation}
\lim_{K\to \infty}\norm{fg - f_Kg_K}_{w,\Xi}=0, \qquad 
\lim_{K\to \infty} C_{K}^{\ast}
\Bigl(\|f_K-f_{r}\|_{w,\Xi} + \|g_K-g_{r}\|_{w,\Xi} \Bigr) = 0,
\label{eqn:truncated_product_accuracy_CK_condition}
\end{equation}
where $r:=\left\lfloor K/2\right\rfloor$.
Then 
\[\lim_{K\to \infty}\norm{fg - f_K\ast_K g_K}_{w,\Xi}=0.\]
\end{theorem}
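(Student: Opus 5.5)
The plan is to use the consistency property of $\ast_K$ (Definition~\ref{def:truncated_product}(c)) on low-degree pieces and bilinearity on the remainders. With $r=\lfloor K/2\rfloor$ we have $2r\le K$. Since $f_r,g_r\in P_r(\Xi)$, their product lies in $P_{2r}(\Xi)\subset P_K(\Xi)$, so consistency gives $f_r\ast_K g_r=f_rg_r$ exactly. Splitting $f_K=f_r+(f_K-f_r)$ and $g_K=g_r+(g_K-g_r)$ and expanding by bilinearity,
\[
f_K\ast_K g_K = f_r g_r + (f_K-f_r)\ast_K g_K + f_r\ast_K (g_K-g_r).
\]
Thus
\[
fg-f_K\ast_K g_K=(fg-f_rg_r)-(f_K-f_r)\ast_K g_K - f_r\ast_K(g_K-g_r).
\]

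For the last two terms, apply \eqref{eqn:truncated_product_bound}. Because the projections $f_r,g_K$ have norms at most $\norm{f}_{w,\Xi}$ and $\norm{g}_{w,\Xi}$, this gives the bound
\[
C_K^\ast\bigl(\norm{f_K-f_r}_{w,\Xi}\norm{g}_{w,\Xi}+\norm{f}_{w,\Xi}\norm{g_K-g_r}_{w,\Xi}\bigr)
\le \max(\norm{f}_{w,\Xi},\norm{g}_{w,\Xi})\,C_K^\ast\bigl(\norm{f_K-f_r}_{w,\Xi}+\norm{g_K-g_r}_{w,\Xi}\bigr).
\]
This tends to zero by the second hypothesis in \eqref{eqn:truncated_product_accuracy_CK_condition}.

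For the first term, note that $r=\lfloor K/2\rfloor\to\infty$ as $K\to\infty$, and $K\mapsto r$ is onto the nonnegative integers. The first hypothesis says $\norm{fg-f_Ng_N}_{w,\Xi}\to 0$ as $N\to\infty$. Taking $N=r$, this forces $\norm{fg-f_rg_r}_{w,\Xi}\to0$ as $K\to\infty$, since $f_rg_r$ is just the sequence $f_Ng_N$ evaluated along indices going to infinity. The triangle inequality then combines the three estimates.

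No step is really hard. The only points needing care are these: the first term must be read as the given hypothesis along the subsequence $N=r$, rather than via any bound on $\norm{f_rg_r}$. One must also check that $f_rg_r\in P_K(\Xi)$, which is exactly why $r=\lfloor K/2\rfloor$ was chosen, so that consistency applies.
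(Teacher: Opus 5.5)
Your proof is correct and follows essentially the same route as the paper. In both, consistency gives $f_r\ast_K g_r=f_rg_r$ for $r=\lfloor K/2\rfloor$, the triangle inequality isolates $\norm{fg-f_rg_r}_{w,\Xi}$, and bilinearity together with \eqref{eqn:truncated_product_bound} and the bound $\norm{\Pi_K f}_{w,\Xi}\le\norm{f}_{w,\Xi}$ controls the remainder. The differences are cosmetic: you split the remainder as $(f_K-f_r)\ast_K g_K+f_r\ast_K(g_K-g_r)$ rather than the paper's $(f_K-f_r)\ast_K g_r+f_K\ast_K(g_K-g_r)$, and you state explicitly that the first term vanishes by applying the first hypothesis along $r\to\infty$, which the paper leaves implicit.
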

\begin{proof} 
Since $r=\lfloor K/2\rfloor$, we have $\deg(f_{r}g_{r})\le 2r\le K$, and $f_r\ast_K g_r= f_rg_{r}$ following the consistency property of $\ast_K$. 
Using this identity, we write 
\begin{equation}
\norm{fg-f_K\ast_K g_K}_{w,\Xi}\le
\norm{fg-f_{r}g_{r}}_{w,\Xi}+
\norm{f_{r}\ast_K g_{r}-f_K\ast_K g_K}_{w,\Xi}.
\label{eqn:truncated_product_accuracy_split}
\end{equation}
It remains to estimate the second term. By bilinearity and
\eqref{eqn:truncated_product_bound},
\begin{align}
\|f_K\ast_K g_K-f_{r}\ast_K g_{r}\|_{w,\Xi}
&\le
C_K^\ast\Bigl( \|f_K-f_{r}\|_{w,\Xi}\|g_{r}\|_{w,\Xi} + \|f_K\|_{w,\Xi}\|g_K-g_{r}\|_{w,\Xi} \Bigr) \notag \\
&\le
C_K^\ast\Bigl( \|g\|_{w,\Xi}\|f_K-f_{r}\|_{w,\Xi} + \|f\|_{w,\Xi}\|g_K-g_{r}\|_{w,\Xi} \Bigr),
\end{align}
where the last inequality follows from the fact that $\Pi_K$ is the orthogonal projection in $L^2_w(\Xi)$ which leads to $\|f_K\|_{w,\Xi}\le \|f\|_{w,\Xi}$,  $\|g_{r}\|_{w,\Xi}\le \|g\|_{w,\Xi}$.
The conclusion now follows from \eqref{eqn:truncated_product_accuracy_split}.
\end{proof}

To illustrate the preceding result, we consider the following numerical
experiment. Let $\Xi=[-1,1]$
with $w\equiv 1$, and let $m,\rho,f: \Xi \to \R$ be defined as
\[
m(\xi)=\sin(\pi \xi),\quad \rho(\xi)=1+0.2\cos(\xi),\quad
f(\xi)=\frac{m(\xi)^2}{\rho(\xi)}.
\]
Let $m_K=\Pi_K m$ and $\rho_K=\Pi_K\rho$, then we define $f_K$ as
$\G_K\left(\P(\widehat{\rho_K})^{-1}\P(\widehat{m_K})\widehat{m_K}\right)$,
where $\ast$ is a truncated product. Then, we calculate the norm of the
residual. The results are presented in
\cref{fig:truncated_product_accuracy} for several choices 
of truncated products and polynomial degree $K$. From the results, we
observed that the pseudospectral product, the AS product, the
$L^\infty$ product, and
the zero product all have comparable approximation errors, and the polynomial $f_K$
obtained from each of these products converges to $f$ as $K$ increases.

\begin{figure}[htbp]
\includegraphics[scale=1]{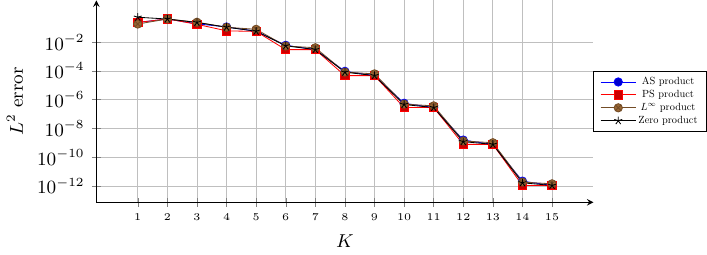}
\caption{The $L^2$ norm of the residual
$\norm{f-f_K}_{w,\Xi}$ for different truncated products on
$P_K(\Xi)$ with $\Xi=[-1,1]$ and $w\equiv 1$.
Here, PS denotes ``pseudospectral''.
}
\label{fig:truncated_product_accuracy}
\end{figure}

\subsection{Extension to multidimensional random variables}
\label{sec:multi_dimensional_truncated_products}
The results in the previous subsection extend naturally to the case of
multidimensional random variables via tensor products. More precisely, let
$\Xi=\Xi_1\times \Xi_2 \times \cdots \times \Xi_d\subset \R^d$, and let
$w(\xi)=\prod_{i=1}^d w_i(\xi_i)$ be a weight function on $\Xi$. Then, we define
$\phi_{\mathbf{k}}(\xi)=\prod_{i=1}^d \phi_{k_i}(\xi_i)$, where
$\mathbf{k}=(k_1,k_2,\dots,k_d)$ and $\phi_{k_i}$ is the $k_i$-th orthonormal
polynomial with respect to $w_i$ on $\Xi_i$. Now, given truncated products
$\ast_1,\ast_2,\dots,\ast_d$ on $P_{K}(\Xi_1),P_{K}(\Xi_2),\dots,
P_{K}(\Xi_d)$, respectively, we define the truncated product $\ast$ on the tensor product space
$Q_{K}(\Xi)$ as follows: For any two basis functions
$\phi_{\mathbf{i}},\phi_{\mathbf{j}}$ with
$\mathbf{i}=(i_1,\dots,i_d)$ and
$\mathbf{j}=(j_1,\dots,j_d)$, we define
\begin{equation}\phi_{\mathbf{i}}\ast \phi_{\mathbf{j}}=
\prod_{\ell=1}^d
\left(\phi_{i_\ell}\ast_\ell \phi_{j_\ell}\right).
\label{eqn:multi_dimensional_truncated_product_def}
\end{equation}
Then, by bilinearity, we can extend $\ast$ to all elements of $Q_K(\Xi)$.
It follows directly from the definition that $\ast$ is associative if each
underlying products $\ast_\ell$ is associative. Similarly, the
product $\ast$ inherit the symmetry and the uniform hyperbolicity
properties of the
underlying products.

The associated
matrices $\{\M^\mathbf{k}\}$ for $\mathbf{k}\in \{0,1,\dots,K\}^d$, can be
obtained directly from the matrices associated with the products
$\{\ast_{\ell}\}_{\ell=1}^d$ via a Kronecker product.
More specifically, if $\{\M^{(\ell),k}\}_{k=0}^K$ denotes the one-dimensional matrices associated
with the product $\ast_{\ell}$, then we can calculate $\M^{\mathbf{k}}$ directly
using the following formula
\begin{equation}
\M^{\mathbf{k}} = \M^{(1),k_1}\otimes
\M^{(2),k_2}\otimes \cdots \M^{(d),k_d},
\quad \mathbf{k}=(k_1,k_2,\dots,k_d)\in \{0,1,\dots,K\}^d.
\label{eqn:multi_dimensional_truncated_product_Mk}
\end{equation}
where $\otimes$ denotes the Kronecker product of matrices. 

This construction provides a direct way to obtain ATPs
on the tensor-product space $Q_K(\Xi)$. However,
it remains unclear whether a simple construction of ATPs on the total-degree space 
$P_K(\Xi)$ exists and, if it does, whether the resulting product
retains the properties described in
the previous subsection (e.g., the explicit description of the eigenvalues of
$\P(\hp)$ and the positive-definiteness criteria).
This difficulty was encountered before in the context of the pseudospectral product
when extending the Haar wavelets (on which the pseudospectral product is
associative) to multiple dimensions using a sparse basis (see Section 3.1.3 in
\cite{gersterStabilizationUncertaintyQuantification2020}). { The
constructed of
ATPs on sparse or total-degree bases in multiple dimensions is definitely
interesting and useful, but it is beyond the scope of the present work.}

\subsection{Summary of the construction of associative truncated products}
\label{subsec:summary_of_construction_procedure}
To summarize, we construct an ATP on $\PK$ by establishing the
matrices $\{\M^k\}_{k}$ associated with the product, then the truncated product
of two functions is calculated in terms of the coefficients of the functions and
the matrices $\{\M^k\}_{k}$ as described in \cref{eqn:pq_M_form}. 
The construction can be reduced to the following steps:
\begin{enumerate}
\item Set $\M^0$ to be the identity matrix.
\item In the case of one random variable: Choose the element $\phi_1\ast\phi_K\in P_K(\Xi)$ and construct the matrix $\M^1$ with $\M^1_{i,j}=\langle
\phi_1\ast\phi_i,\phi_j\rangle_w$.
In the case of multiple random variables with a tensor product
structure: Construct $\M^{(1,0,\dots,0)},\M^{(0,1,0,\dots,0)},\dots,
\M^{(0,\dots,0,1)}$ using
\eqref{eqn:multi_dimensional_truncated_product_Mk}.
\item In the case of one random variable: Use the three term
recurrence relation \cref{eqn:Mk_rec_relation}
to compute the rest of the matrices $\{\M^k\}_{k=2}^K$. In the case of
multiple random variables with a tensor product structure: Use
\eqref{eqn:multi_dimensional_truncated_product_Mk}.
\end{enumerate}

Once the matrices $\{\M^k\}_{k=0}^K$ have been assembled, we store them for use in
\eqref{eqn:pq_M_form} to calculate the truncated product of any two functions in
$\PK$. It is worth noting here that the second step is the most
crucial step in the construction procedure since it involves the choice of the
truncated product. In comparison, the third step is a sequence of matrix
multiplications.

\section{Hyperbolic systems of conservation laws with uncertainty}
\label{sec:hyperbolic_systems}

The main motivation for introducing the notion of associative truncated products
is to construct hyperbolic stochastic Galerkin systems for nonlinear hyperbolic systems of
conservation laws with uncertainty. In this section, we will discuss
a general hyperbolicity criterion for SG systems built from ATPs, and then 
apply it to construct hyperbolic SG systems
for the isothermal Euler equations, and the Euler equations with uncertain
initial conditions.

Before describing the main result, we introduce the nonlinear operations induced by
a truncated product. Let $\ast$ be a truncated product on $\PK$, and let
$a,b\in \PK$. Whenever $\P(\hat b)$ is invertible, we define the truncated
division of $a$ by $b$ by $a\div b:=\G_K\bigl(\P(\hat b)^{-1}\hat a\bigr)$. 
Likewise, whenever there exists $c\in \PK$ such that $a=c\ast c$,
we write $c=\sqrt[\ast]{a}$ and call $c$ a truncated square root of $a$.
In the applications considered below, the positivity assumptions will guarantee
that the relevant matrix inverses and matrix square roots are well-defined.

To simplify the presentation, we will use $\mathcal{A}$ to
denote the class of functions that can be represented by a composition of
rational functions and square roots.
For any $f:\mathbb{R}^n \to \mathbb{R}^n$ with entries belonging to
$\mathcal{A}$, we define $f^*:(\PK)^n \to
(\PK)^n$ by replacing the usual product, division, and square root in each component of
$f$ by the truncated operations $\ast$, $\div$, and $\sqrt[\ast]{\,\cdot\,}$,
respectively. Additionally, we
use $\hyp(f)$ to denote the set of points
$u\in \mathbb{R}^n$ where the Jacobian of $f$ is hyperbolic.

\begin{theorem}\label{thm:rational_function_approximation}
Let $f=(f_i)_{i=1}^n:\mathbb{R}^n \to \mathbb{R}^n$ be a flux whose entries are rational functions. 
Assume that its hyperbolicity region is of the form 
\[
\hyp(f)=\{u\in \mathbb{R}^n: p_i(u) > 0, \, i=1,\dots,m\},
\]
where each $p_i$ is a polynomial. Furthermore, assume that each entry of $S(u)$, the matrix of
eigenvectors of the Jacobian of $f$ at $u\in \hyp(f)$, belongs to $\mathcal{A}$. 

Let $\ast$ be an associative, hyperbolic and positive definite
truncated product on $\PK$, and let
$\what{f^{\ast}}:\mathbb{R}^{n(K+1)}\to
\mathbb{R}^{n(K+1)}$, be
the coefficients of $f^*$ (defined above) in the basis $\{\phi_k\}_{k=0}^K$ and
\[
\mathcal{Z} :=
\{(\what{u_1},\dots,\what{u_n})\in \mathbb{R}^{n(K+1)}:
p^\ast_i(u_1,\dots,u_n)(\xi) > 0, i=1,\ \dots,m,
\forall \xi \in \Xi\},
\] 
then
\[
\mathcal Z\subset \hyp(\widehat{f^\ast}),
\]
i.e., the Jacobian of the stochastic Galerkin flux is hyperbolic at every state in $\mathcal Z$.
\end{theorem}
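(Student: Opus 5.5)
The approach is to reduce everything to the common eigenbasis of the truncated product, where the SG flux Jacobian decouples into $K+1$ copies of the deterministic Jacobian. Since $\ast$ is associative and hyperbolic, it is uniformly hyperbolic. Hence there is one invertible $V$ with $V\P(\hp)V^{-1}=\Lambda(\hp):=\diag(\lambda_0(\hp),\dots,\lambda_K(\hp))$ for all $\hp$, and each $\lambda_j$ is linear in $\hp$. Using \cref{lem:associativity_PpPq}, $\P(\what{a\ast b})=\P(\hat a)\P(\hat b)$ and $\P(\what{a\div b})=\P(\hat b)^{-1}\P(\hat a)$. In particular $\lambda_j(\what{a\ast b})=\lambda_j(\hat a)\lambda_j(\hat b)$ and $\lambda_j(\what{a\div b})=\lambda_j(\hat a)/\lambda_j(\hat b)$, so each $\lambda_j$ is a ring homomorphism $(\PK,\ast)\to\R$. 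Note also that $V\hat a=\Lambda(\hat a)V\hat e$ with $\hat e=\widehat{\phi_0}$, because $\hat a=\P(\hat a)\hat e$. Setting $c:=V\hat e$, we get $(V\hat a)_j=\lambda_j(\hat a)c_j$. The vector $c$ has no zero entries, since otherwise $V$ would be singular.

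\textbf{Step 1: diagonalize the flux.} Apply $V$ blockwise to each component $\hat u_i$, and write $\tilde u^{(j)}:=(\lambda_j(\hat u_1),\dots,\lambda_j(\hat u_n))\in\R^n$. Because $f$ is rational and each $\lambda_j$ is multiplicative and respects division, induction over the expression tree of $f$ gives $\lambda_j(\what{f_i^\ast}(\hat u))=f_i(\tilde u^{(j)})$, wherever the truncated divisions are defined. Thus, in the coordinates $(\lambda_j(\hat u_i))_{i,j}$, which form an invertible linear change of variables $\hat u\mapsto (I_n\otimes D_c^{-1}V)\hat u$, the SG flux becomes $\tilde u^{(j)}\mapsto f(\tilde u^{(j)})$ for $j=0,\dots,K$, fully decoupled. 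Its Jacobian is block diagonal with blocks $Df(\tilde u^{(j)})$, up to a permutation, and it is similar to the Jacobian of $\what{f^\ast}$.

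\textbf{Step 2: locate each block in $\hyp(f)$.} For the same reason, $\lambda_j(\what{p_i^\ast}(\hat u))=p_i(\tilde u^{(j)})$. On $\mathcal Z$, $p_i^\ast(u)$ is positive on $\Xi$, so positive definiteness gives that all eigenvalues of $\P(\what{p_i^\ast})$ are positive. These eigenvalues are exactly the numbers $\lambda_j(\what{p_i^\ast})$, so $p_i(\tilde u^{(j)})>0$ for all $i,j$, i.e.\ $\tilde u^{(j)}\in\hyp(f)$. The same argument applied to the denominators of $f$ would guarantee that the truncated divisions are well defined; I will assume these denominators are controlled through the $p_i$ or the admissible set, and state this assumption explicitly. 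Each block $Df(\tilde u^{(j)})$ is then diagonalizable with real eigenvalues, so the block-diagonal matrix is too, and so is the SG Jacobian by similarity. The eigenvector matrix is assembled from $S(\tilde u^{(j)})$. The $\mathcal A$ hypothesis on $S$ lets one write the eigenvectors as $S^\ast$ of the truncated states, but for hyperbolicity itself only the diagonalizability of each block is needed.

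\textbf{Main obstacle.} The delicate part is the rigorous justification of $\lambda_j\circ f^\ast=f\circ(\lambda_j)$ and of the differentiation. The constructed map $f^\ast$ depends on how $f$ is written, but after diagonalization every expression for the same rational function evaluates to $f(\tilde u^{(j)})$, so the choice is harmless. I would prove this identity by structural induction, using \cref{lem:associativity_PpPq} for the multiplication and division nodes. After that, the chain rule on the conjugated map gives the block-diagonal Jacobian directly.
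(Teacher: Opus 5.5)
Your argument is correct, but it runs in the opposite order from the paper's.

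The paper works with the commuting multiplication matrices throughout:
\begin{enumerate}
\item It proves, by a monomial-by-monomial differentiation rule plus the quotient rule, that $J_{\what{f^\ast}}=J_f(\P(\hat u_1),\dots,\P(\hat u_n))$.
\item It lifts the deterministic eigenvector matrix to $W=S(\P(\hat u_1),\dots,\P(\hat u_n))$ and uses the commutative matrix algebra to get the block-diagonal form $W^{-1}J_{\what{f^\ast}}W=\Lambda(\P(\hat u_1),\dots,\P(\hat u_n))$.
\item Only at the end does it apply the common eigenbasis $V$ blockwise.
\end{enumerate}

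You diagonalize the algebra first. The eigenvalue functionals $\lambda_j$ are unital characters of $(\PK,\ast)$, and $\hat a\mapsto(\lambda_j(\hat a))_j$ is a linear bijection. In these coordinates the SG flux is $K+1$ decoupled copies of $f$. Hence $J_{\what{f^\ast}}$ is similar to the block-diagonal matrix with blocks $Df(\tilde u^{(j)})$, and positive definiteness places each $\tilde u^{(j)}$ in $\hyp(f)$.

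What your route buys:
\begin{itemize}
\item It never uses the hypothesis that the entries of $S$ lie in $\mathcal{A}$.
\item It never has to give meaning to $S(\P(\cdot))$ and $S^{-1}(\P(\cdot))$ (matrix square roots and their branches, invertibility of denominators at matrix arguments), which the paper treats only briefly.
\item It exhibits the SG spectrum explicitly as the union of the spectra of $Df(\tilde u^{(j)})$. This is directly relevant to the wave speed in the Lax--Friedrichs flux.
\item It is the abstract form of the collocation decoupling in \cref{sec:decouplability_and_collocation}; for the AS product, $\lambda_j(\hat p)=p(\xi_j)$ at the Gauss nodes.
\end{itemize}
The paper's route, in exchange, gives the Jacobian identity in terms of multiplication matrices and an eigenvector matrix written directly in SG quantities, as in the Euler lemma.

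The well-definedness caveat you flag can be closed inside your own framework. Write each $f_i=N_i/D_i$ with a single division, and assume the $D_i$ do not vanish on $\hyp(f)$; this is implicit in speaking of $J_f$ there. On $\mathcal{Z}$ you already know $\tilde u^{(j)}\in\hyp(f)$, and Step 2 uses only the polynomials $p_i$, so there is no circularity. Hence the eigenvalues $D_i(\tilde u^{(j)})$ of $\P(\what{D_i^\ast})$ are nonzero, and the truncated divisions are defined. By continuity they stay defined on a neighborhood, which is what you need in order to differentiate.
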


\begin{proof}
We first record how differentiation behaves under the truncated operations.
Since $\ast$ is associative, $\P(\widehat{p\ast q})=\P(\hat p)\P(\hat q)$.
Also, commutativity of $\ast$ implies that the matrices
$\P(\hat p)$ and $\P(\hat q)$ commute. Thus products of elements in $\PK$ may be
represented equivalently by products of their multiplication matrices.
Consider a monomial of the form $\psi(u_1,u_2)=u_1^{a}u_2$ with $a\ge
1$, then
$\widehat{\psi^\ast}(\what{u_1},\what{u_2})=\P(\what{u_1})^{a}\what{u_2}$. 
Differentiating this expression with respect to $\what{u_1}$ and using the
commutativity of the multiplication matrices gives
$$
\partial_{\what{u_1}} \widehat{\psi^\ast}\left(\what{u_1},\what{u_2}\right)
=a\P(\what{u_1})^{a-1} \P(\what{u_2})
=\partial_{u_1}\psi(\P(\what{u_1}),\P(\what{u_2})).
$$
The same argument applies to each monomial, and by linearity we obtain, for any
polynomial $\psi:\mathbb{R}^n\to\mathbb{R}$,
\[
\partial_{\what u_j}\widehat{\psi^\ast}(\what u_1,\dots,\what u_n)
=
\partial_{u_j}\psi(\P(\what u_1),\dots,\P(\what u_n)).
\]
Applying the same argument together with the quotient rule gives, for each
rational component $f_i$,
\begin{equation}
\partial_{\what u_j}\widehat{f_i^\ast}(\what{u_1},\dots,\what{u_n})
=
\partial_{u_j}f_i\bigl(\P(\what{u_1}),\dots,\P(\what{u_n})\bigr).
\label{eqn:lifted_jacobian_entries}
\end{equation}

By applying this idea to all components of $f$, we 
can express the Jacobian of $\what{f^*}$ at the point
$(\what{u_1},\dots,\what{u_n})$, denoted $J_{\what{f^*}}$, as follows
\begin{equation}
J_{\what{f^*}}=
\begin{bmatrix}
  \partial_{\what{u_1}}\what{f_1^*} & \cdots & \partial_{\what{u_n}}\what{f_1^*}\\
\vdots & \ddots & \vdots\\
\partial_{\what{u_1}}\what{f_n^*} & \cdots & \partial_{\what{u_n}}\what{f_n^*}
\end{bmatrix}
=J_f\left(\P(\what{u_1}),\dots, \P(\what{u_n})\right).
\label{eqn:jacobian_of_f_star_block_structure}
\end{equation}
Now, consider $\hat{u}=(\what{u_1},\dots,\what{u_n})\in \mathcal{Z}$, then by
the associativity and positivity of $\ast$, all eigenvalues of
$p_i(\P(\what{u_1}),\dots,\P(\what{u_n}))$  are positive. Also, since each entry of $S$ is in $\mathcal{A}$ and $S$ is invertible,
each entry
of  $S^{-1}$ is in $\mathcal{A}$ as well.
Hence, the inverse of  $W=S(\P(\widehat{u_1}),\dots,\P(\widehat{u_n}))$  is given by 
$W^{-1}=
S^{-1}(\P(\widehat{u_1}),\dots,\P(\widehat{u_n}))
$.

Since $\ast$ is associative and commutative, it follows then that the set of matrices $\{\P(\x)\}_{\x\in \mathbb{R}^{K+1}}$
forms a commutative algebra with the usual matrix addition and multiplication. Therefore, 
 the matrix  $S^{-1}J_fS=\Lambda(u_1,\cdots,u_n)$ being diagonal implies
$W^{-1}J_{\what{f^*}}W=$ $\Lambda\bigl(\P(\what u_1),\dots,\P(\what u_n)\bigr)$ is bloc diagonal and that each diagonal entry is of the form
$g(\P(\hu_1),\dots,\P(\hu_n))$ where $g \in \mathcal{A}$.

Finally, since $\ast$ is
uniformly hyperbolic (since it is associative and hyperbolic), there is a
matrix $V$ such as $V^{-1}\P(\hu_i)V$ is diagonal for all $i=1,\dots,n$.
Therefore, $W\diag(V,\dots,V)$ diagonalizes $J_{\what{f^*}}$, which
implies that $J_{\what{f^*}}$ is hyperbolic.
\end{proof}

Examples of system that satisfy the assumptions of \cref{thm:rational_function_approximation} include the
isothermal Euler equations, the Euler equations, and the shallow water
equations. We will discuss the first two systems in more detail in the next two
subsections. 

\subsection{The isothermal Euler equations}
\label{sec:isothermal_euler}
Consider the isothermal Euler equation with friction source term on a domain $(a,b)\times (0,T)$,
\begin{equation}
\p_t(\rho,m) +\p_x(m,\rho +m^2\rho^{-1}) =(0,\fr(\rho,m))
\label{eqn:isothermal_euler}
\end{equation}
where $\rho,m,\fr$ represent the density, momentum and friction, respectively.
We assume that $\fr$ is modeled by the steady state
friction
\cite{bandaGasFlowPipeline2006,gugatIsothermalEulerEquations2017,
zhouSimulationTransientsNatural2000}
\[
\fr(\rho,m)= -f_g \frac{m|m|}{2D\rho},
\]
where $f_g$ is the steady state friction factor and $D$ is the diameter of the
pipeline. 

The stochastic version of \eqref{eqn:isothermal_euler} accounts for
uncertainties in $\rho$, $m$ or $\fr$. That is, $\rho$ and $m$ are now
functions of $x,t$ and the random variable $\xi$, and they are
subject to a given (noisy) initial condition $\rho(x,0,\xi)=\rho_0(x,\xi)$  and
$m(x,0,\xi)=m_0(x,\xi)$.  In the SG setting, we approximate $\rho$ and $m$ 
by $\rho_K,m_K\in \PK$, with coefficient
vectors $\hrho$ and $\hm$. Using the truncated product $\ast$ for
the nonlinear flux term gives the stochastic Galerkin system
\begin{equation}
\p_t (\hrho,\hm) + \p_x(\hm,\hrho+
\P(\hm)\P(\hrho)^{-1}\hm)=\left(\zero,\widehat{\fr}(\hrho,\hm)\right),
\label{eqn:isothermal_vector_form}
\end{equation}
where $\widehat{\fr}(\hrho,\hm)$ is the vector of coefficients of
the projection of
$\fr(\rho_K,m_K)$ onto  $\PK$. Thus, the stochastic isothermal Euler equation is
reduced to a system of $2(K+1)$ deterministic PDEs. By direct calculation, we
know that the Jacobian of the flux $f(\rho,m)=(m,\rho+m^2\rho^{-1})$ is
hyperbolic for $\rho >0$. Hence, by
\cref{thm:rational_function_approximation}, the Jacobian of flux function in the
SG system $\hat{f}(\hrho,\hm)=(\hm,\hrho+\P(\hm)\P(\hrho)^{-1}\hm)$ is
hyperbolic for all $\hat{\rho}$ such that $\G_K(\hrho)>0$ as stated
in the following lemma.

\begin{lemma}
Let $\ast$ be an associative, hyperbolic and positive definite truncated product
on $\PK$, then the SG system \eqref{eqn:isothermal_vector_form} is
hyperbolic for all $\hat{\rho}$ such that $\G_K(\hrho)>0$.
\end{lemma}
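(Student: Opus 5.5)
We will obtain the lemma as a direct instance of \cref{thm:rational_function_approximation} with $n=2$, $u=(\rho,m)$ and $f(\rho,m)=(m,\rho+m^2\rho^{-1})$. The plan is to check the theorem's hypotheses one by one, and then to confirm that the admissible set $\mathcal{Z}$ reduces to the condition $\G_K(\hrho)>0$.

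\textbf{Step 1: the deterministic flux.} Both components of $f$ are rational. Its Jacobian is
\[
J_f=\begin{bmatrix}0&1\\ 1-m^2\rho^{-2} & 2m\rho^{-1}\end{bmatrix},
\]
with eigenvalues $m\rho^{-1}\pm1$ and eigenvectors $(1,\,m\rho^{-1}\pm1)^T$. These eigenvalues are real and distinct whenever $\rho\neq0$. On the physically relevant region we therefore take $\hyp(f)=\{(\rho,m):p_1(\rho,m)=\rho>0\}$, so $m=1$ and $p_1(\rho,m)=\rho$, which is a polynomial. The eigenvector matrix
\[
S=\begin{bmatrix}1&1\\ m\rho^{-1}-1& m\rho^{-1}+1\end{bmatrix}
\]
has rational entries, so they lie in $\mathcal{A}$. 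Moreover $\det S=2$ is constant, so $S^{-1}$ is also rational.

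\textbf{Step 2: identify the SG flux and $\mathcal{Z}$.} Because $\ast$ is associative, \cref{lem:associativity_PpPq} gives
\[
\widehat{(m\ast m)\div\rho}=\P(\hrho)^{-1}\P(\hm)\hm=\P(\hm)\P(\hrho)^{-1}\hm,
\]
where the last equality uses commutativity of the multiplication matrices. Hence $\widehat{f^\ast}$ coincides with the flux of \eqref{eqn:isothermal_vector_form}. Since $p_1^\ast(\rho_K,m_K)=\rho_K=\G_K(\hrho)$, the set $\mathcal{Z}$ is exactly $\{(\hrho,\hm):\G_K(\hrho)(\xi)>0\ \forall\xi\in\Xi\}$.

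\textbf{Step 3: well-definedness.} For $\hrho\in\mathcal{Z}$, positive definiteness makes every eigenvalue of $\P(\hrho)$ positive, so $\P(\hrho)$ is invertible. Both the SG flux and the block matrix $W=S(\P(\hrho),\P(\hm))$ are then well defined. Applying \cref{thm:rational_function_approximation} yields $\mathcal{Z}\subset\hyp(\widehat{f^\ast})$, which is the claim.

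\textbf{Main obstacle.} The only delicate point is that the theorem requires $\hyp(f)$ to be described by polynomial inequalities. Strictly, $J_f$ is also hyperbolic for $\rho<0$, so we should restrict attention to the admissible region $\rho>0$ and note that the theorem's argument only uses this region. For extra safety we can verify directly that the eigenvalue blocks $\P(\hm)\P(\hrho)^{-1}\pm I$ are simultaneously diagonalized by the matrix $V$ coming from uniform hyperbolicity. Together with the invertibility of $W$, which is immediate since $\det$ of the block matrix equals $\det(2I)$ by commutativity, this gives real diagonalizability of the SG Jacobian.
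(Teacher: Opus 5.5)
Your proposal is correct and follows the paper's own route: the paper also derives this lemma by observing that $f(\rho,m)=(m,\rho+m^2\rho^{-1})$ has a hyperbolic Jacobian for $\rho>0$ and then invoking \cref{thm:rational_function_approximation}. Your added checks fill in details the paper leaves implicit: that $\det S=2$, so $S^{-1}$ evaluates cleanly at commuting matrices; that the argument only needs $\{\rho>0\}\subset\hyp(f)$; and the direct diagonalization $W^{-1}JW=\diag(\sU-I,\sU+I)$.
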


The positivity condition on $\G_K(\hrho)$ ensures, by positive definiteness of
the truncated product, that all eigenvalues of $\P(\hrho)$ are positive; in
particular, $\P(\hrho)$ is invertible and the flux in
\eqref{eqn:isothermal_vector_form} is well defined. 
In particular, the assumption
that $\ast$ is associative is crucial, since it is known that the pseudospectral
product, which is hyperbolic and positive definite, could lead to a
non-hyperbolic
system when $K\ge 2$ even if the pressure is positive 
\cite{jinStudyHyperbolicityKinetic2019}.
This does not contradict the lemma above since the pseudospectral product is
only associative when $K\le 1$.

\subsection{Collocation Interpretation and Decoupling}
\label{sec:decouplability_and_collocation}
The SG system \eqref{eqn:isothermal_vector_form} cannot, in general, be
decoupled into $2(K+1)$ independent systems of PDEs due to the coupling among the stochastic modes 
introduced by the friction term. However, if the friction term is zero, then the
system admits a collocation interpretation for
the class of products described in
\cref{thm:associative_hyperbolic_characterization}
and may be decoupled. Assume that $\Xi\subset \mathbb{R}$ and $\ast$
is associative and hyperbolic truncated product on $\PK$ such that $\phi_1\ast
\phi_K-\phi_1\phi_K$ has $K+1$ distinct roots $\{\xi_i\}_{i=0}^K$,
then it follows from
\cref{thm:associative_hyperbolic_characterization} that $\ast$ corresponds to a
collocation product with collocation points $\{\xi_i\}_{i=0}^K$. Now, let
$\check{\rho}_i$ and $\check{m}_i$ be the values of $\G_K(\hrho)$ and
$\G_K(\hm)$ at the
collocation point $\xi_i$, then by applying $\cdot \mapsto
\G_K(\cdot)(\xi_i)$ to the system \eqref{eqn:isothermal_vector_form} with
$f_g=0$,
we obtain the following system of PDEs for $\check{\rho}_i$ and $\check{m}_i$:
\begin{equation}
\p_t(\check{\rho}_i,\check{m}_i) +\p_x\left(\check{m}_i,\check{\rho}_i
+\frac{\check{m}_i^2}{\check{\rho}_i}\right)=0.
\end{equation}
Thus, the stochastic Galerkin system written in the variables
$\{(\check{\rho}_i,\check{m}_i)\}_{i=0}^K$ is equivalent to applying the
stochastic collocation (SC) method at the 
collocation points $\{\xi_i\}_{i=0}^K$. 
Conversely, given any set of $K+1$ distinct collocation points $\{\xi_i\}_{i=0}^K$, we can
construct a collocation product as discussed in
\cref{sec:examples_associative_products}. This product is associative and
hyperbolic, and positive definite. Therefore, for the
homogeneous isothermal Euler equations, the SG system generated by this product
is hyperbolic and it is equivalent, after the change of variables from coefficient values to nodal
values, to the SC system at the same nodes.
This is illustrated by the following  diagram:

\begin{center}
\begin{tikzcd}[column sep=20ex]
\text{Collocation points } \{\xi_i\}
\arrow[r, shift left,
"(p\co q)(\xi_i)=p(\xi_i)q(\xi_i)"]
\arrow[d, ] &
\text{Collocation product $\co$}
\arrow[d ] \arrow[l,shift left,
"\text{roots of }\phi_1\ast\phi_K-\phi_1\phi_K"]\\
\text{SC in nodal variables }(\check{\rho},\check{m})
\arrow[r,<->,dashed ]
& \text{SG in coefficient variables }(\hrho,\hm)
\end{tikzcd}
\end{center}

The equivalence above should be understood at the level of the homogeneous
semi-discrete stochastic Galerkin system after a change of variables from modal
coefficients to nodal values. Thus, for (only) a collocation product, the SG formulation
recovers the same decoupled PDEs as stochastic collocation at the corresponding
nodes. The viewpoint is nevertheless different from applying SC directly. In the
present approach, the collocation structure arises from the choice of an
ATP within an intrusive SG formulation, and the same
algebraic framework can also be used when the system contains source terms or
other operations that couple the stochastic modes. At the fully discrete level,
the two implementations may also differ: a direct SC method often advances each
sample independently, possibly with sample-dependent time steps or solver
choices, whereas the SG formulation is advanced as a single coefficient system,
typically with one global time step determined by the largest wave speed over
the stochastic modes.

\subsection{Euler equations in one space dimension}
\label{sec:euler_1d}
In this section, we consider the one-dimensional Euler equations
for an ideal gas, which can be written in conservative form as
\begin{equation}
(\rho,m,E)_t + \left(m,\frac{m^2}{\rho}+p,\frac{m}{\rho}(E+p)\right)_x = 0,
\label{eqn:euler_deterministic}
\end{equation}
where $\rho$ is the density, $m=\rho u$ is the momentum, $u$ is the velocity,
$E$ is the total energy, and $p$ is the pressure. The system is closed
by the equation of state for an ideal gas:
\begin{equation}
p = (\gamma - 1)\left(E - \frac{m^2}{2\rho}\right),
\label{eqn:ideal_gas_law}
\end{equation}
where $\gamma > 1$ is the adiabatic coefficient. 
The deterministic system is hyperbolic on the admissible set
$\rho>0$ and $p>0$,
or equivalently $\rho>0$ and $2\rho E-m^2>0$.
Following the same ideas as
before, we assume that $\u=(\rho, m, E)$ depend on a random variable $\xi$ with
a known distribution, and we seek the SG solution $\hat{\u}(x,t)=(\hrho,\hm,\hE)$ in the form of a polynomial chaos expansion with coefficients. 
Using the truncated product $\ast$, the SG system for Euler equations can be written as 
\begin{equation}
\p_t(\hrho,\hm,\hE) + \p_x\left(\hm,\P(\hm)\P(\hrho)^{-1}\hm+\hp,
\P(\hm)\P(\hrho)^{-1}\left(\hE+\hp\and\right)\right) = 0,
\label{eqn:Euler_SG_formulation}
\end{equation}
where $\hp=(\gamma-1)(\hE-\tfrac{1}{2}\P(\hm)\P(\hrho)^{-1}\hm)$. 
\begin{lemma}
\label{lem:hyperbolicity_of_euler_SG}
Let $\ast$ be an associative, hyperbolic and positive definite truncated product
on $\PK$, then the SG system \eqref{eqn:Euler_SG_formulation} is
hyperbolic for all $(\hrho,\hm,\hE)$ satisfying $\G_K(\hrho)>0$ and
$\G_K(\hp)>0$.
\end{lemma}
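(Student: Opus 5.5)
The plan is to apply \cref{thm:rational_function_approximation} to the Euler flux $f(\rho,m,E)=\bigl(m,\ m^2/\rho+p,\ \tfrac{m}{\rho}(E+p)\bigr)$ with $p=(\gamma-1)(E-m^2/(2\rho))$. Since $p$ is rational, every component of $f$ is rational in $(\rho,m,E)$. I will take the admissible set in the form $\hyp(f)\supseteq\{\rho>0,\ 2\rho E-m^2>0\}$, so the polynomials in the theorem are $p_1=\rho$ and $p_2=2\rho E-m^2$.

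The first check is that the SG flux \eqref{eqn:Euler_SG_formulation} coincides with $\widehat{f^\ast}$ in the sense of the theorem. For this I will use \cref{lem:associativity_PpPq}. Writing the truncated quotient as $m\ast m\div\rho$, the theorem gives
\[
\widehat{m\ast m\div\rho}=\P(\hrho)^{-1}\P(\hm)\hm=\P(\hm)\P(\hrho)^{-1}\hm .
\]
The second equality holds because the multiplication matrices commute. The energy flux component is handled the same way, and $\hp$ is exactly the coefficient vector of $p^\ast$.

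Next, the eigenvector matrix $S(u)$ of the Euler Jacobian has entries in $\mathcal{A}$. The standard right eigenvectors are $(1,u-c,H-uc)$, $(1,u,u^2/2)$ and $(1,u+c,H+uc)$, where $u=m/\rho$, $H=(E+p)/\rho$ and $c=\sqrt{\gamma p/\rho}$. These are compositions of rational functions and one square root, as required.

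The remaining hypothesis is that $\mathcal{Z}$ contains the set $\{\G_K(\hrho)>0,\ \G_K(\hp)>0\}$. Here the lemma's assumption is stated through $p^\ast$ rather than $2\rho\ast E-m\ast m$. I will use $p_2=\rho\cdot p$ as the second polynomial, or equivalently state the hyperbolicity region via $\rho>0$ and $p>0$, and then redo the one place in the theorem's proof where positivity is used. That step needs all eigenvalues of $\P(\hrho)$ and of $\P(\hp)$ to be positive, and positive definiteness of $\ast$ gives exactly this from $\G_K(\hrho)>0$ and $\G_K(\hp)>0$.

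Then I simultaneously diagonalize by $V$, since the product is uniformly hyperbolic. Under $V$, the $\P(\cdot)$ matrices become $\diag\bigl(\rho(\lambda_i)\bigr)$, $\diag\bigl(p(\lambda_i)\bigr)$, and so on, where the diagonal entries are the eigenvalues. The block Jacobian $J_f\bigl(\P(\hrho),\P(\hm),\P(\hE)\bigr)$ then decouples into $K+1$ deterministic Euler Jacobians, evaluated at states with positive density and positive pressure. Each of these is hyperbolic, and so is the whole system.

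The main obstacle is this positivity step. Note that $\P(\hp)$ is itself a rational function of $\P(\hrho),\P(\hm),\P(\hE)$, because
\[
\P(\hp)=(\gamma-1)\bigl(\P(\hE)-\tfrac12\P(\hm)^2\P(\hrho)^{-1}\bigr),
\]
again by \cref{lem:associativity_PpPq}. So its eigenvalues are exactly the deterministic pressures at the decoupled states. The point I must verify carefully is therefore that $\G_K(\hp)>0$ on $\Xi$ gives positive eigenvalues of $\P(\hp)$, which is the definition of positive definiteness. This also makes the square root in $c$ well defined on each diagonal block, which the theorem's proof implicitly needs.
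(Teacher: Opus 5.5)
Your proof is correct, but the argument that actually carries it (your last two paragraphs) runs in the opposite order from the paper's.

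The paper works at the matrix level. It writes $J_{\what{f^\ast}}$ in terms of $\sU=\sR^{-1}\sM$ and $\sH$, and picks a square root $\sC$ of $\gamma\sP\sR^{-1}$. It then checks $W^{-1}J_{\what{f^\ast}}W=\diag(\sU-\sC,\sU,\sU+\sC)$ for the lifted Euler eigenvector matrix $W$, and only afterwards diagonalizes each block with $V$.

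You instead conjugate $J_f\bigl(\P(\hrho),\P(\hm),\P(\hE)\bigr)$, i.e.\ the identity \eqref{eqn:jacobian_of_f_star_block_structure}, by $\diag(V,V,V)$ first. A permutation then splits it into $K+1$ deterministic Euler Jacobians at the joint-eigenvalue states $(\rho_i,m_i,E_i)$. Positive definiteness, together with your formula $\P(\hp)=(\gamma-1)\bigl(\P(\hE)-\tfrac12\P(\hm)^2\P(\hrho)^{-1}\bigr)$, gives $\rho_i>0$ and $p_i>0$. Deterministic hyperbolicity finishes the proof.

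What each route buys:
\begin{itemize}
\item Your route needs no matrix square root.
\item Invertibility of the eigenvector matrix comes for free from $c_i>0$.
\item You avoid the requirement, implicit in the paper, that $\sC$ be chosen inside the commutative algebra so that it commutes with $\sU$ and $\sH$. A non-primary square root can fail this when $\gamma\sP\sR^{-1}$ has repeated eigenvalues.
\item You also get the SG wave speeds $u_i,u_i\pm c_i$ explicitly.
\item The paper's route, in exchange, gives a closed-form eigenvector matrix $W\diag(V,V,V)$ that literally lifts the deterministic one, which is what the authors set out to illustrate.
\end{itemize}

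Two minor points. First, you can drop the detour through \cref{thm:rational_function_approximation} (the check on $S$ and the choice $p_2=\rho p$). With that choice $\mathcal Z$ would require pointwise positivity of $\rho\ast p^\ast$, which does not follow from $\G_K(\hrho)>0$ and $\G_K(\hp)>0$. In the end you only use the Jacobian identity from that theorem. Second, writing the diagonal entries as $\rho(\lambda_i)$ and $p(\lambda_i)$ presupposes the one-dimensional structure of \cref{lem:Results_about_ATP}. Simply call them the eigenvalues, as you already do.
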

Although this result can be derived from
\cref{thm:rational_function_approximation} directly, we will provide a proof 
below to illustrate the construction of the matrix $W$ 
in the proof of   the aforementioned theorem.

\begin{proof}
The argument here follows the same lines as argument for the
hyperbolicity of the
deterministic system \eqref{eqn:euler_deterministic}.
Let $\what{f^*}$ be the flux in \eqref{eqn:Euler_SG_formulation},  
$J_{\what{f^*}}$ be its Jacobian.  We define
$\sU=\P(\hrho)^{-1}\P(\hm)$, $\sP=\P(\hp)$, $\sR=\P(\hrho)$, $\sM=\P(\hm)$, $\sE=\P(\hE)$
and $\sH=(\sE+\sP)\sR^{-1}$. Then,
by a direct computation, we have
\[J_{\what{f^*}}=
\frac{\partial {\what{f^*}}}{\partial \huu} =
\begin{bmatrix}
O & I & O  \\
\displaystyle \frac{\gamma - 3}{2} \sU^2 & (3 - \gamma)\sU & (\gamma - 1)I \\
\displaystyle \frac{\gamma - 1}{2}\sU^3  -
\sU\sH
&
\displaystyle  \sH - (\gamma - 1) \sU^2 & \gamma \sU
\end{bmatrix},\]
where $O$ is the zero matrix.
Now, let $\sC$ be a square root of ${\gamma \sP\sR^{-1}}$, then
\[
W^{-1}J_{\what{f^*}} W=\diag(\sU-\sC,\sU,\sU+\sC),\quad
\text{where }\ W=
\begin{bmatrix}
I & I & I \\
\sU-\sC & \sU & \sU+\sC \\
\sH-\sU\sC & \frac{1}{2}\sU^2 & \sH+\sU\sC
\end{bmatrix}.
\]
It follows from the associativity of $\ast$ that $\sU$ and $\sC$ are simultaneously diagonalizable by a matrix $V$. Thus, the matrix $W\diag(V,V,V)$ diagonalizes the matrix $J_{\what{f^*}}$.
\end{proof}
We note that in the proof, we only required $\sC$ to have a square root. This
square root need not be unique or symmetric positive definite for the lemma
to hold. For instance, the lemma holds for the $L^\infty$ truncated product
$\pinf$ provided that the conditions on $\P(\hrho)$ and $\P(\hp)$ are met.

\subsection{Time integration}
\label{sec:time_integration}
The systems \eqref{eqn:isothermal_vector_form} and
\eqref{eqn:Euler_SG_formulation} are systems of hyperbolic PDEs of the form
$\huu_t + \hff(\huu)_x = \hat{S}(\huu)$, where $\huu$ is the vector of SG coefficients,
$\hff$ is the SG flux function, and $\hat{S}$ is the projected source term. In the numerical experiments below, we use a first-order finite volume spatial discretization 
with the local Lax-Friedrichs flux and a forward Euler time stepping. More precisely,
with a given cell
$I_i=[x_{i-1/2},x_{i+1/2}]$ and time $t_n$, we will update the
solution using 
\begin{subequations}
\label{eqns:time_discretization}
\begin{equation}
\label{eqn:fully_discrete_FV}
\frac{\huu_j^{n+1}-\huu_j^n}{\Delta t}
+\frac{\widehat{\mathcal{F}}_{j+\frac{1}{2}}^n
-\widehat{\mathcal{F}}_{j-\frac{1}{2}}^n}{\Delta x} = \widehat{S}_j^n,
\end{equation}
where $\widehat{\mathcal{F}}_{j+1/2}^n$ is the numerical flux taking the form
\begin{equation}
\label{eqn:lax_friedrichs_flux}
\widehat{\mathcal{F}}_{j+\frac{1}{2}}^n =
\frac{1}{2}\left(\hff(\huu_j^n)+\hff(\huu_{j+1}^n)\right)
-\frac{a_{j+\frac12}^n}{2}\left(\huu_{j+1}^n-\huu_j^n\right).
\end{equation}
\end{subequations}
Here, $a_{j+1/2}^n$ is the maximum of the spectral radius of the Jacobian of
$\hff$ at time $t_n$ over the cells $I_j$ and $I_{j+1}$. The source
term $\widehat{S}_j^n$ is evaluated at
the center of the cell $I_j$ at time $t_n$. The time step size $\Delta t$ is chosen to satisfy
the CFL condition, i.e., $\Delta t \le \mathtt{CFL}\frac{\Delta x}{\max_j a_{j+1/2}^n}$,
with $\mathtt{CFL}<1$.

\section{Numerical results}
\label{sec:numerical_results}
In this section, we present some numerical experiments highlighting the
accuracy and the hyperbolicity-preserving property of the SG systems constructed
using an ATP. In all examples,  we observed that  
computed stochastic Galerkin states remained in the admissible set identified in the preceding
analysis, and the corresponding flux Jacobians remained hyperbolic throughout
the computation.

In the following numerical experiments,
the physical domain is discretized into $N$ uniform cells, and the uncertainty is
represented by a polynomial chaos expansion of degree $K$ with coefficients
vector $\hat{\u}$. We use the first order local Lax-Friedrichs and forward Euler method 
described in \cref{sec:time_integration} with a CFL constant
$\mathtt{CFL}=0.9$. In each example, we compare the stochastic
Galerkin solution to the
Monte-Carlo solution obtained from $10^5$ samples of either the exact solution or a
high-fidelity numerical solution. To simplify the
presentation of the plots, we adopt the legend shown in \autoref{fig:legend}.

\begin{figure}[htbp]
\centering
\includegraphics[width=.9\textwidth,trim=00pt 20pt 0pt 20pt,clip]
{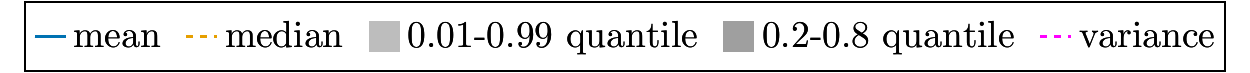}
\caption{The legend used in the plots in the numerical results section.}
\label{fig:legend}
\end{figure}
When the solution is smooth,  we compare the mean, 
median, and variance of the solutions obtained from both SG and Monte-Carlo methods. 
Since the basis is orthonormal, the mean and
variance of the SG solution on each cell are given explicitly by
$\hu_0(t)$ and $\hu_1(t)^2+\dots+\hu_K(t)^2$, respectively.
We use $\hat{E}^{\mathbb{E}}_K$, $\hat{E}^{\mathbb{V}}_K$, and
$\hat{E}^{\mathsf{Med}}_K$ to denote the
difference between the mean, variance, and median of the SG
and Monte-Carlo solutions, respectively. We use $\norm{\cdot}_{1}$ and
$\norm{\cdot}_{\infty}$ to denote the $L^1$ and $L^\infty$ norms over
the spatial domain.

\subsection{Isothermal Euler equations: Smooth solution}
\label{example:isothermal_smooth}

In order to investigate the convergence of the numerical scheme under mesh
refinement, we consider random isothermal Euler equations on
$(-1,1)\times [0,T]\times \Xi$ with non-zero source term such that the exact
solution is given by
\begin{equation}
\label{eqn:exact_solution_isothermal_smooth}
\u(x,t,\xi)=
\begin{bmatrix}
\rho_0+\rho_1 \tanh\left(s(x_0-x+t+\xi)\right)\\
\tanh\left(s(x_0-x+m_0+t+\xi)\right)+\tanh(-s(x_0-x-m_0+t+\xi))
\end{bmatrix}
\end{equation}
where $(\rho_0,\rho_1,s,x_0,m_0)=(0.75,0.25,3,0,0.1)$, and $\xi \sim
\mathcal{U}(-\sigma\sqrt{3},\sigma\sqrt{3})$ with $\sigma=0.1$.

In this example, we use the
AS product to define the SG system \eqref{eqn:isothermal_vector_form}. 
We solve the SG system using the local Lax-Friedrichs method
\eqref{eqn:fully_discrete_FV} from $T=0$ to $T=0.2$ with different mesh sizes
$N=2^{p}, 4\le p\le 13$ and polynomial degrees $1\le K\le 5$. Lastly, we compute the $L^1$
norms of the errors between the mean, variance, and median of the SG solutions and Monte-Carlo solutions,
which we present in \cref{fig:isothermal_smooth_Linfty_errors}.

From the results, we observe that for larger values of $K$, the
errors decrease at a rate of $\mathcal{O}(\Delta x)$ as expected
since the local Lax-Friedrichs method is first-order accurate. We note that for
smaller values of $K$, the errors stagnate as the mesh is refined, which is
expected since the error is dominated by the truncation error in the
polynomial chaos expansion.
This example shows that when a smooth solution is considered and a sufficient
number of polynomial chaos terms are used, the SG method
converges under mesh refinement at the expected rate for the finite-volume
method used.
\begin{figure}[htbp]
\centering
\includegraphics[width=.85\textwidth]{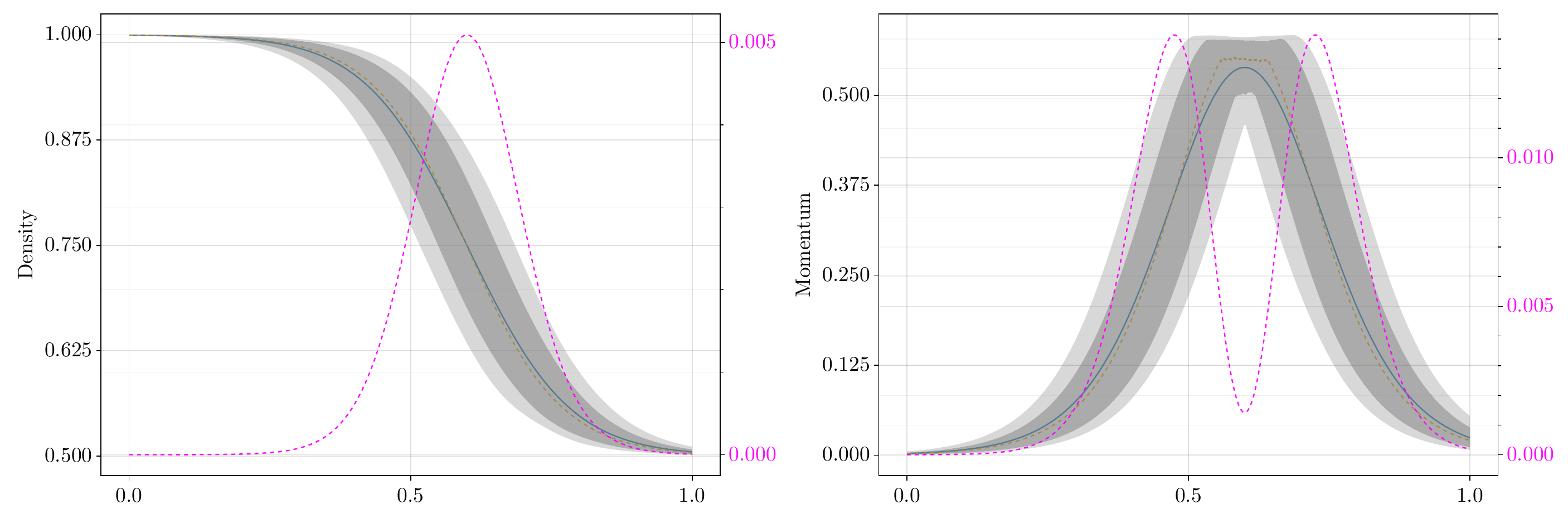}
\caption{The SG  solution to the isothermal Euler equations
given in \eqref{eqn:exact_solution_isothermal_smooth} with $N=2^{13}$ and
$K=3$ at
$t=0.2$.}
\label{fig:reference_solution_isothermal_smooth}
\end{figure}
\begin{figure}[htbp]
\centering
\includegraphics[width=.31\textwidth]{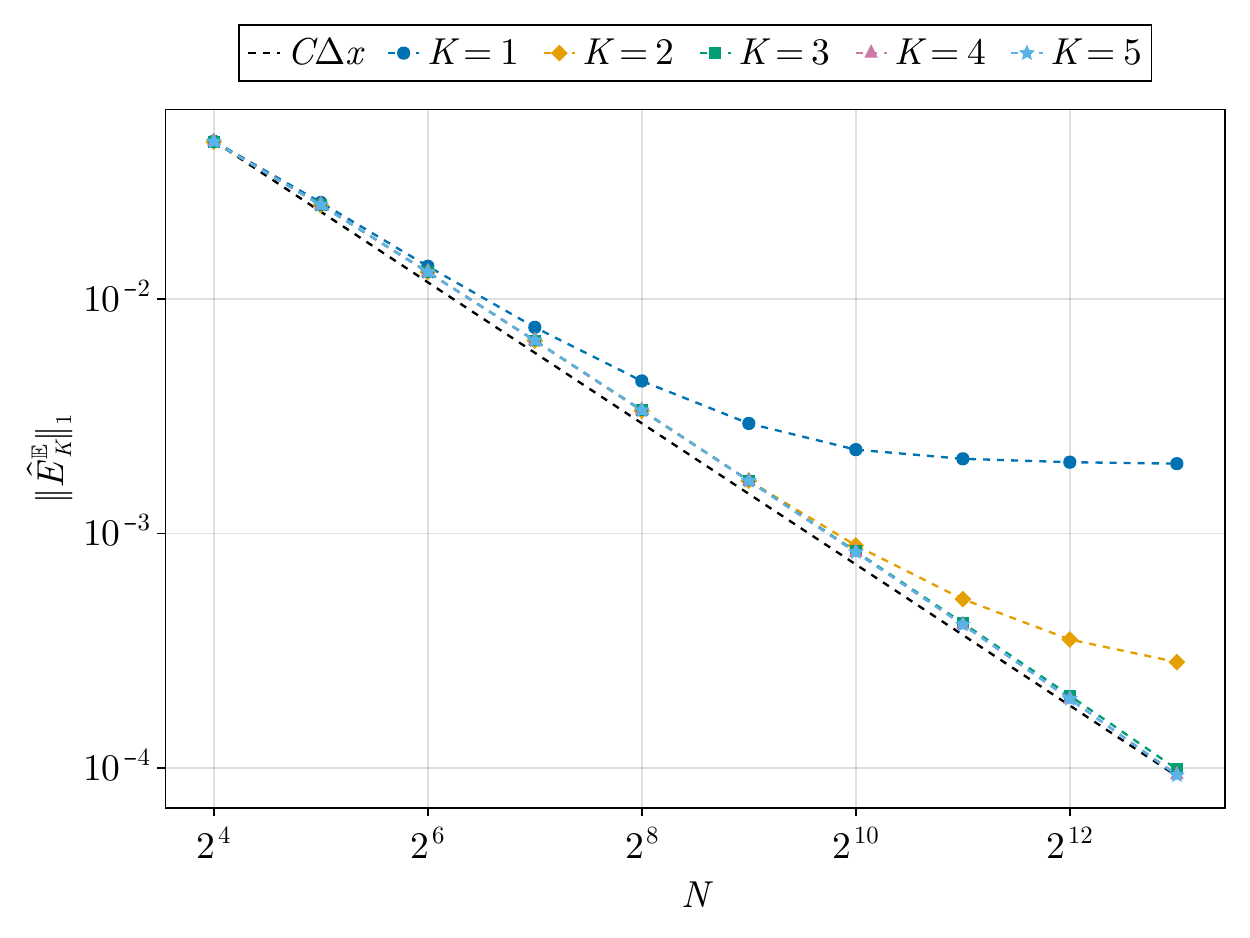}%
\includegraphics[width=.31\textwidth]{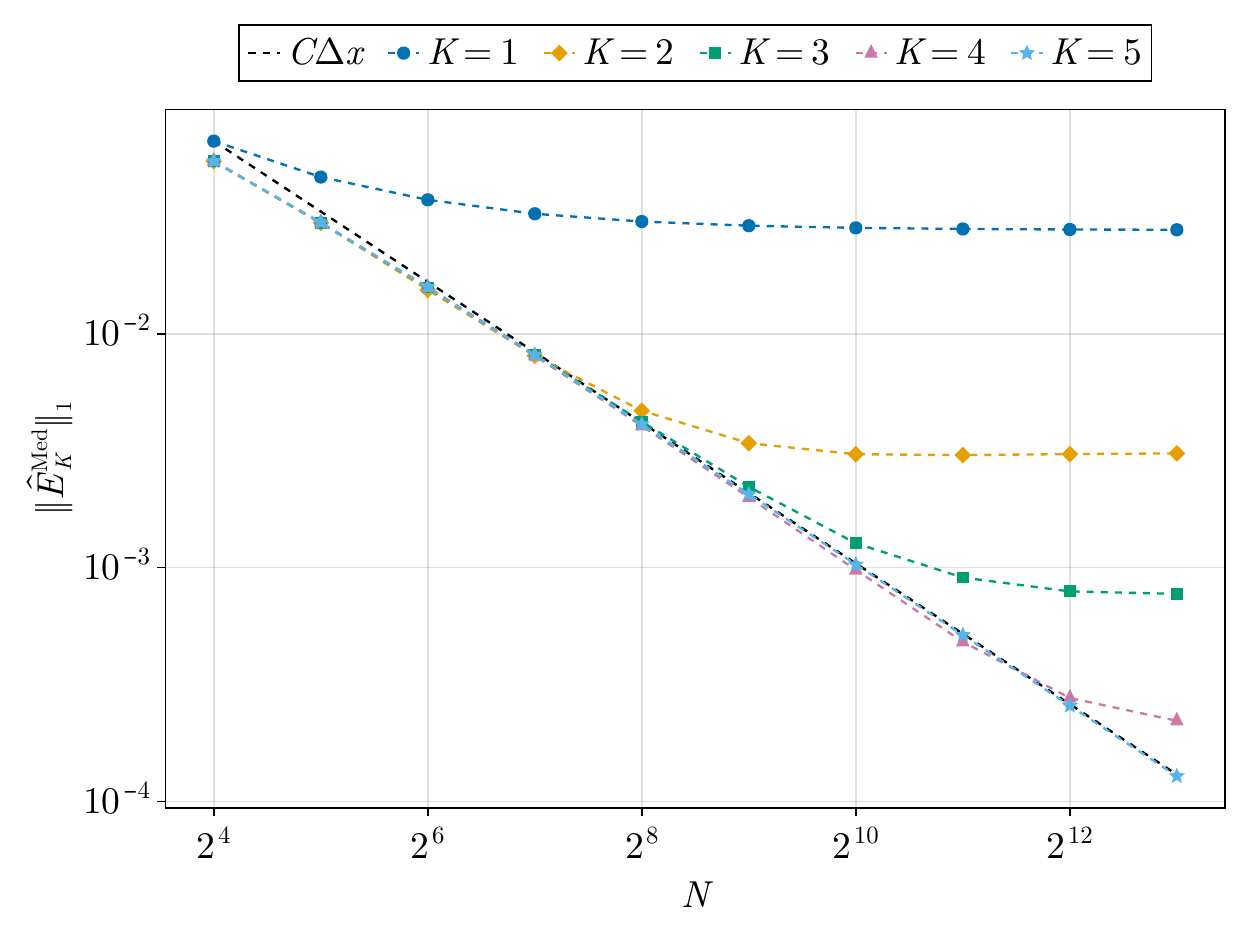}%
\includegraphics[width=.31\textwidth]{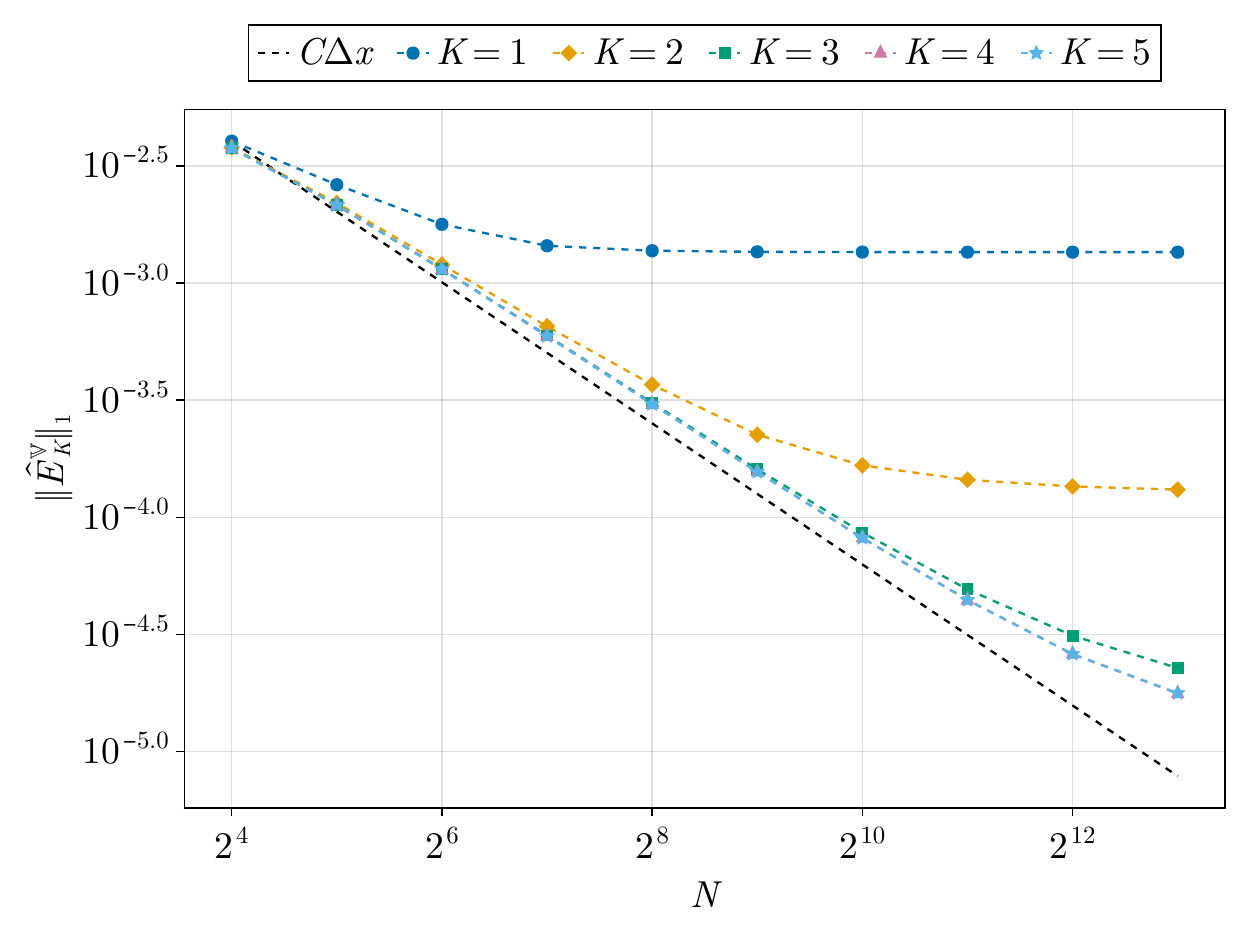}%
\caption{The $L^1$ errors of
the mean (left),  median (center), and variance (right) of the density
obtained from the SG method with different degrees $K$ and
mesh sizes $N$.}
\label{fig:isothermal_smooth_Linfty_errors}
\end{figure}

\subsection{Isothermal Euler equation: Loss of hyperbolicity when using the pseudospectral
  product}
  \label{example:isothermal_loss_of_hyperbolicity}

In this example, we consider the isothermal Euler equations without friction, and an initial
condition chosen such that the pressure is positive for every $\xi$, but the SG system
constructed using the pseudospectral product is not hyperbolic. More precisely,
let  $\Xi=[-1,1]$ and $w\equiv 1/2$ be the uniform distribution on $\Xi$, and let $\{\phi_k\}_{k=0}^3$ be the first four Legendre polynomials orthonormal with respect to $w$. We consider the initial condition
\[
\u(x,0,\xi)=
\begin{cases}
  \sum_{k=0}^3\left( c_k \phi_k(\xi),d_k \phi_k(\xi)\right), & x\in (-1,0),\\
  (0.25,0), & x\in [0,1),
  \end{cases}
  \label{eqn:IC_isothermal_loss_of_hyperbolicity}
\]
where $\mathbf{c}=\{c_k\}_{k=0}^3$ and $\mathbf{d}=\{d_k\}_{k=0}^3$ are
  obtained using the algorithm described in
\cite{jinStudyHyperbolicityKinetic2019} to ensure that the pressure
is positive for
every $\xi$ but the SG system with the pseudospectral product is not hyperbolic.
More specifically, the coefficients $\mathbf{c}$ and $\mathbf{d}$ used in this
example are as follows
\[
\mathbf{c}=
\begin{bmatrix}
1.0\\
0.016660998882728408\\
0.002426705004550297\\
0.02524092143028821
\end{bmatrix}
,\qquad 
\mathbf{d}=
\begin{bmatrix}
0.0\\
0.02867300084262938\\ 
0.02012262860910195\\
0.006875745230701752\\
\end{bmatrix}.
\]
Nevertheless, the SG system constructed using the AS product is
hyperbolic for this initial condition and remains hyperbolic throughout the
timestepping procedure.  The entropy solution of the isothermal Euler equation
with the initial
condition \eqref{eqn:IC_isothermal_loss_of_hyperbolicity} consists of a shock
wave and a rarefaction wave, and an exact solution can be found in the
literature  (cf. \cite{levequeNumericalMethodsConservation1992} or
\cite{gersterStabilizationUncertaintyQuantification2020}).

The numerical results with $K=3$ and $N=2^{14}$ cells, at time $t=0.5$ are shown in
\cref{fig:isothermal_shock_tube_no_loss_sg_solution}. The SG solution captures the main
statistical features of the reference solution, including the behavior near the
shock and rarefaction regions, while the computed states remain in the
admissible hyperbolicity region.
\begin{figure}
\centering
\includegraphics[width=.8\textwidth]{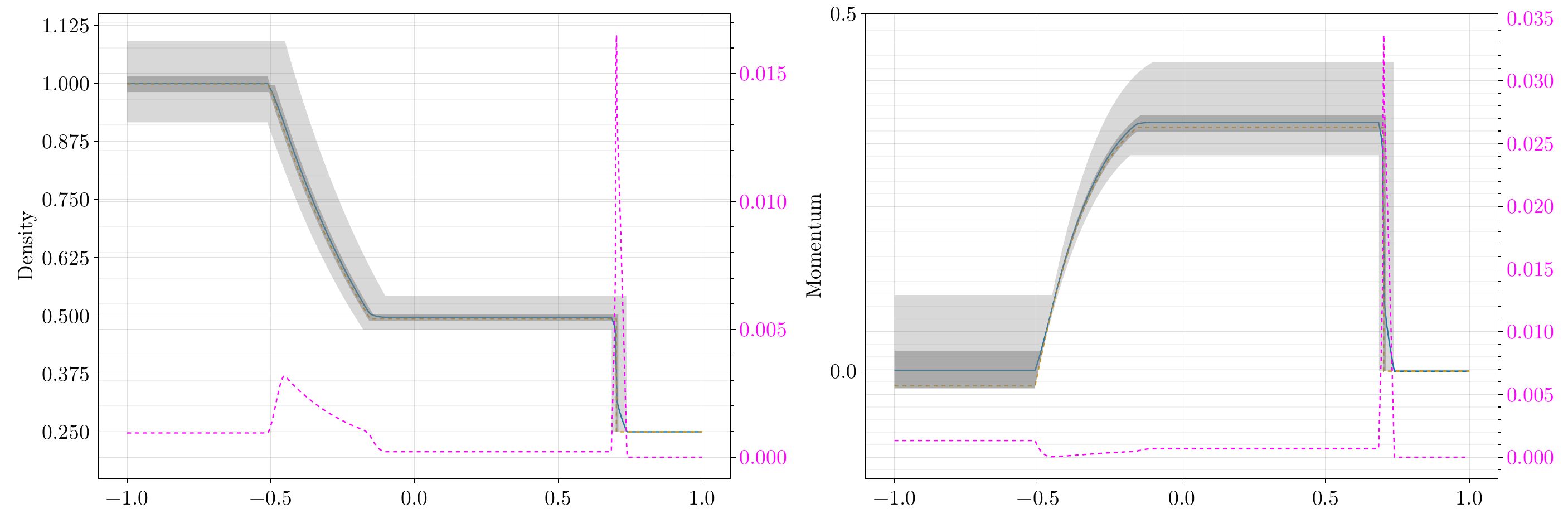}
\includegraphics[width=.8\textwidth]{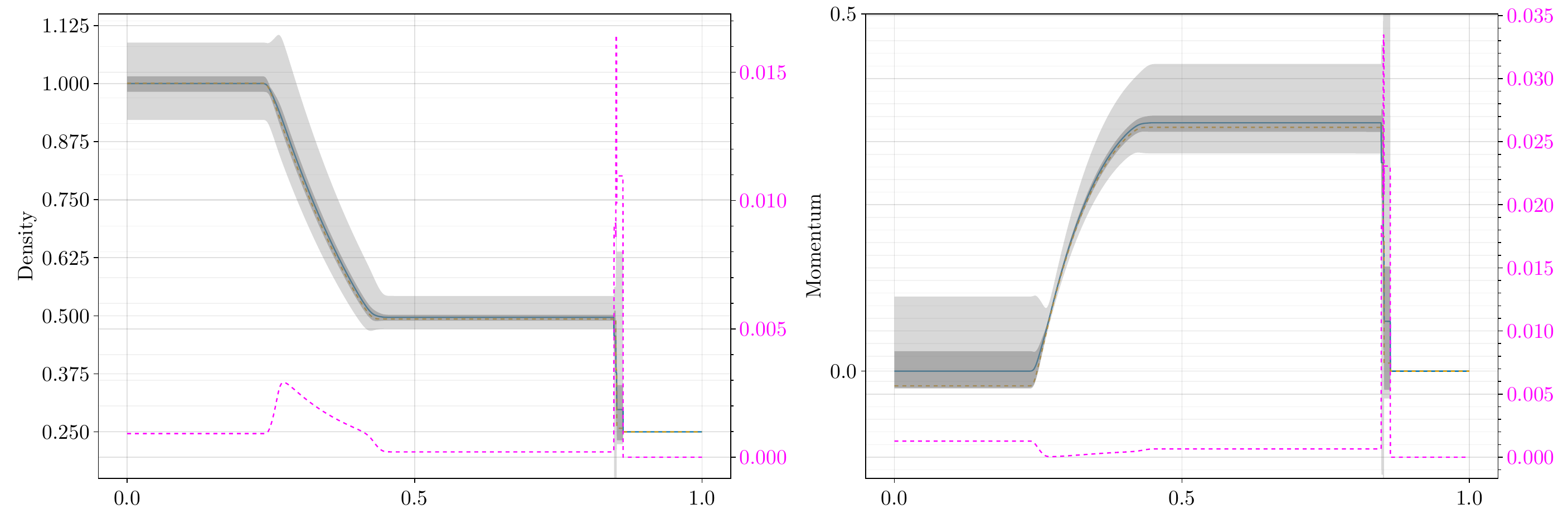}

    \caption{Reference solution (Top) and SG solution (Bottom)
    to the isothermal equation without friction. Here $K=3$ and $N=2^{13}$.
    }
    \label{fig:isothermal_shock_tube_no_loss_sg_solution}
\end{figure}

\subsection{Isothermal Euler equations: Presence of friction}
\label{example:isothermal_pipe}
In this example, we consider the isothermal Euler equations with the
friction source term described in \cref{sec:isothermal_euler}. Here, we consider the
friction coefficient and the diameter of the pipeline to be $f_g=D=1$, and we
assume that initial conditions are given by
\begin{equation}
\u(x,0,\xi)=
\begin{cases}
(\rho_1(\xi),m_1), & x<0,\\
(\rho_2,m_2), & x>0.
\end{cases},\qquad x\in[-1,1],
\label{eqn:IC_isothermal_shock_tube}
\end{equation}
where
$(m_1,\rho_2,m_2)=(0,0.25,0)$, and $\rho_1(\xi) =1+\xi$. Here, $\xi$ follows a
uniform distribution $\mathcal{U}(-\sigma\sqrt{3},\sigma\sqrt{3})$
with $\sigma=0.1$. We compare the solution obtained from the SG method using the
AS product with $K=3$ and $2^{14}$ cells to a reference solution obtained from a Monte-Carlo simulation with
$10^5$ using The Lax-Friedrichs scheme \eqref{eqns:time_discretization}.
The results shown in \cref{fig:isothermal_shock_tube_sg_solution}
indicate that the SG solution captures the behavior of the Monte-Carlo
solution, even at point of high gradients, while the computed states remain in the admissible hyperbolicity
region.

\begin{figure}[htbp]
\centering
\includegraphics[width=0.8\textwidth]{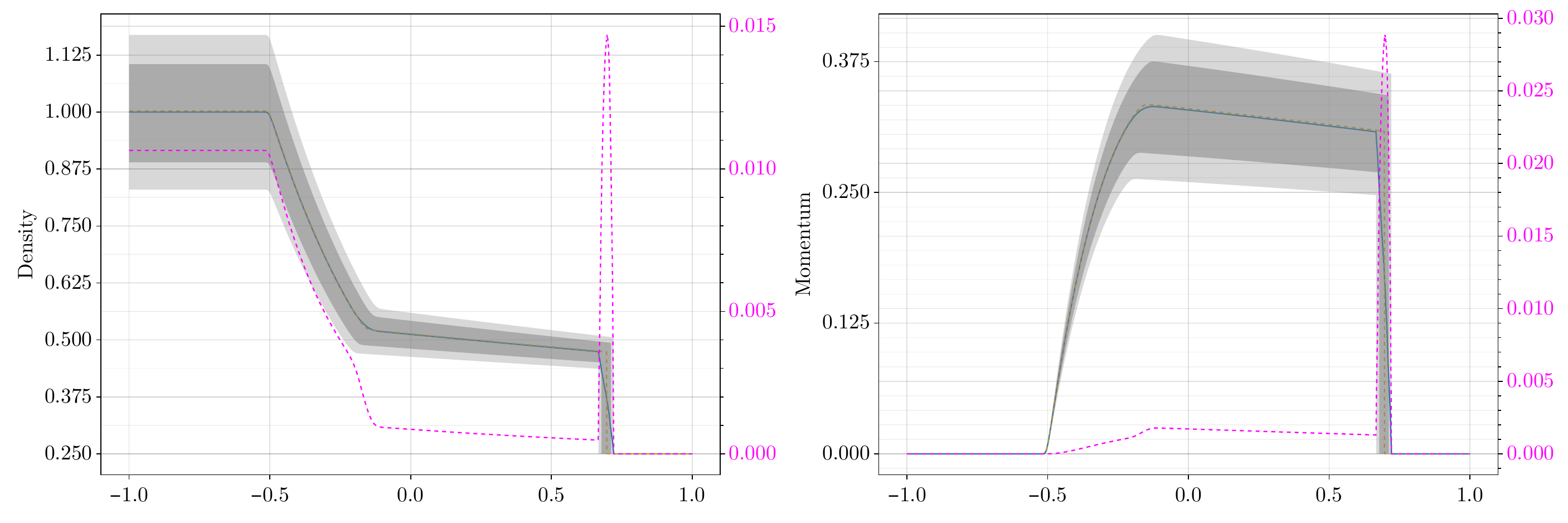}
\includegraphics[width=0.8\textwidth]{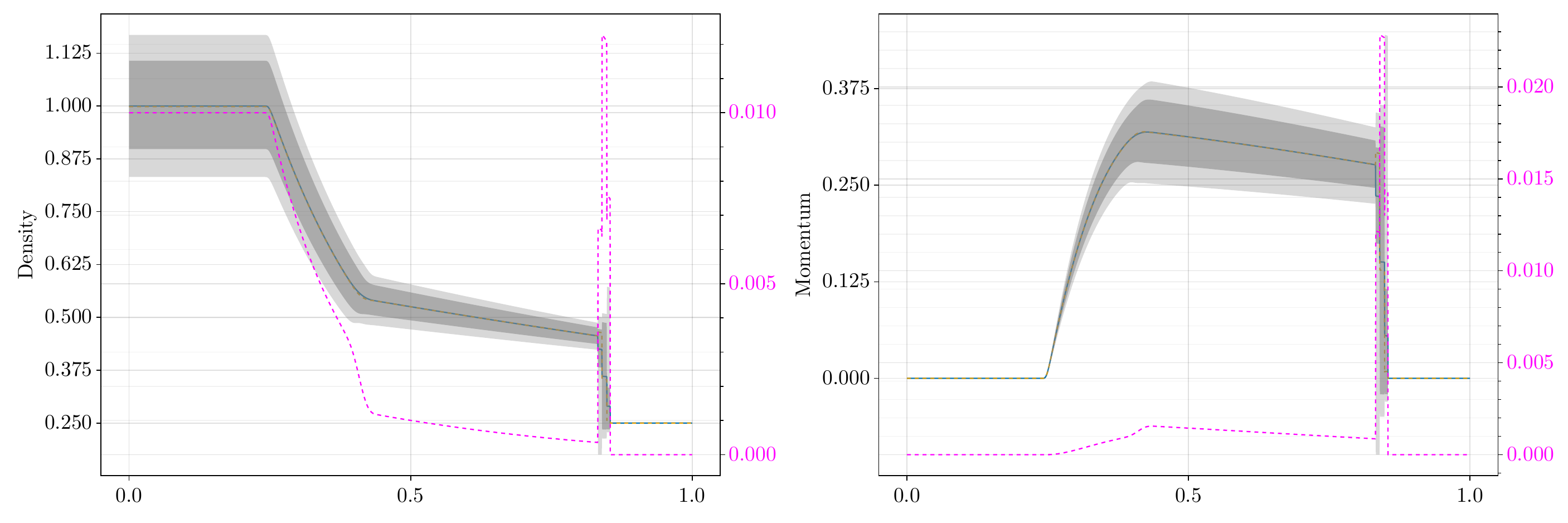}
\caption{The Monte Carlo solution (top) and SG solution (bottom) to the
isothermal Euler equations
with the initial conditions \eqref{eqn:IC_isothermal_shock_tube}
at $t=0.5$ 
using $N=2^{14}$ cells and $K=3$.}
\label{fig:isothermal_shock_tube_sg_solution}
\end{figure}

\subsection{Euler equations: Sod's shock tube}
\label{subsec:euler_sods_shock_tube}
We consider the one-dimensional Euler equations \eqref{eqn:euler_deterministic} on
$[0,1]$ with $\gamma=1.4$ and uncertain Sod-type initial condition
$$\u(x,0,\xi)=
\begin{cases}
(1+\xi,0,2.5), & x<\frac{1}{2},\\
(0.125,0,.25), & x>\frac{1}{2},
\end{cases}$$
where $\xi \sim \mathcal{U}(-\sigma\sqrt{3},\sigma\sqrt{3})$ with
$\sigma=0.1$.
For every $\xi\in \Xi$, the initial state satisfies
$\rho>0$ and $p>0$, and is therefore admissible.
For every fixed $\xi$, the analytical solution to this problem can be found in
\cite{sodSurveySeveralFinite1978}, which we input into a Monte-Carlo
simulation with
$10^5$ samples to obtain the reference solution at $t=0.1$ shown in
\cref{fig:euler_sods_shock_tube_sg_solution} on top.
We solve the SG system \eqref{eqn:Euler_SG_formulation} using
the AS product and the local Lax-Friedrichs method \eqref{eqn:fully_discrete_FV} 
with $N=2^{13}$ cells and $K=3$. The results are shown in
\cref{fig:euler_sods_shock_tube_sg_solution} at the bottom. We
observe that the SG solution captures the mean, median, variance, and confidence intervals
of the reference solution well, even across the shock and contact discontinuity.

\begin{figure}[htbp]
\includegraphics[width=0.99\textwidth]{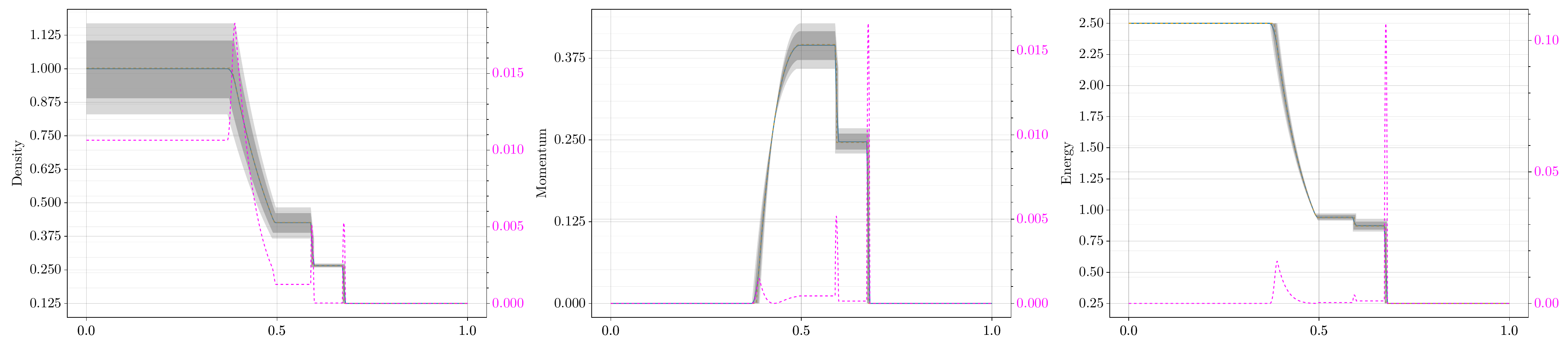}
\includegraphics[width=0.99\textwidth]{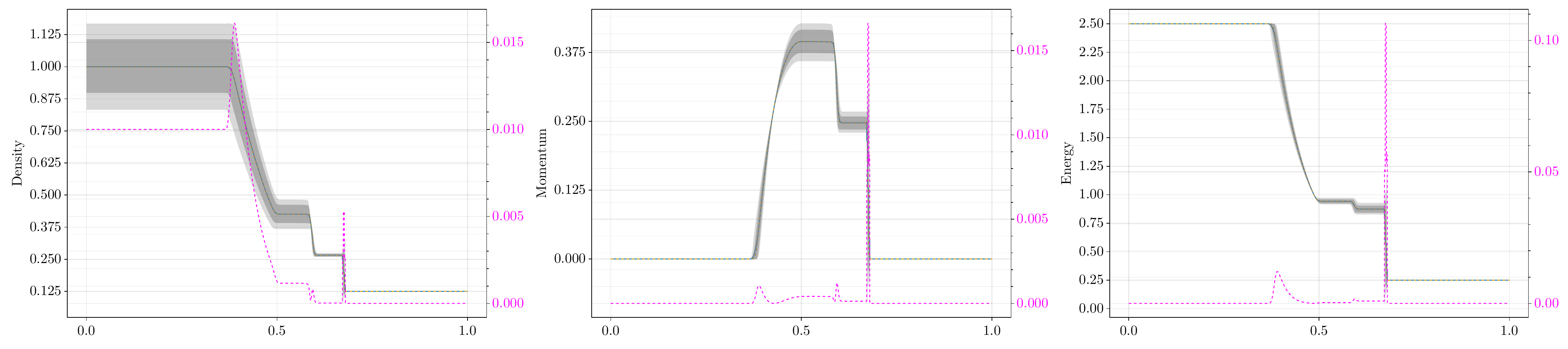}
\caption{Reference solution (top) and SG solution (bottom)
for the Euler Sod shock tube at $t=0.1$ using $N=2^{13}$ cells and $K={3}$.}
\label{fig:euler_sods_shock_tube_sg_solution}
\end{figure}

\subsection{Euler's equations: Two random variables}
\label{subsec:euler_two_d}
Finally, we consider Euler equations \eqref{eqn:euler_deterministic} on
$[0,1]$ with $\gamma=1.4$ and two independent
random variables in the initial data:
\[
\u(x,0,\xi)=
\begin{cases}
(1+\xi_1,0,2.5), & x<\frac{1}{2},\\
(0.125,0,.25+\xi_2), & x>\frac{1}{2},
\end{cases}
\]
where $\xi_i\sim \mathcal{U}(-\sigma\sqrt{3},\sigma\sqrt{3})$ with
$\sigma=0.1$, and the two random variables are independent. 
We solve the SG system
\eqref{eqn:Euler_SG_formulation} using the tensor-product AS product and the local Lax-Friedrichs
method \eqref{eqn:fully_discrete_FV} as in the previous example. Here, the
matrices $\{\M^k\}$ associated with the product are constructed using
\eqref{eqn:multi_dimensional_truncated_product_Mk} from the matrices
associated with the one-dimensional AS product. The results are shown in
\cref{fig:euler_two_d_sg_solution}. Similar to the previous example, we observe
that the SG solution captures the main statistical features of the reference solution.
However, we observe oscillations in the variance of the solution near discontinuity, which may be
due to the Gibbs phenomenon since the solution is discontinuous in the random
space, or due to the choice of the time-stepping scheme.

\begin{figure}
\includegraphics[width=0.99\textwidth]{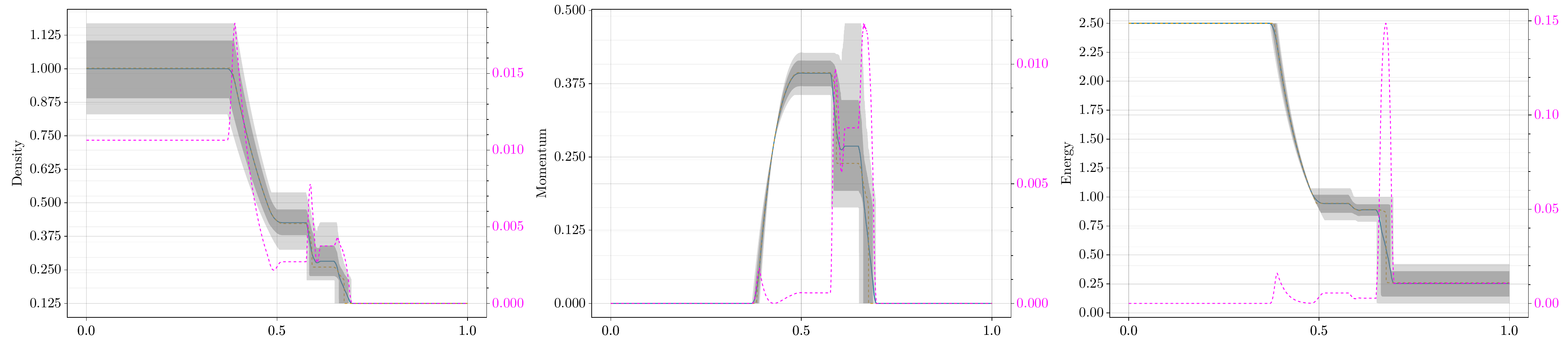}
\includegraphics[width=0.99\textwidth]{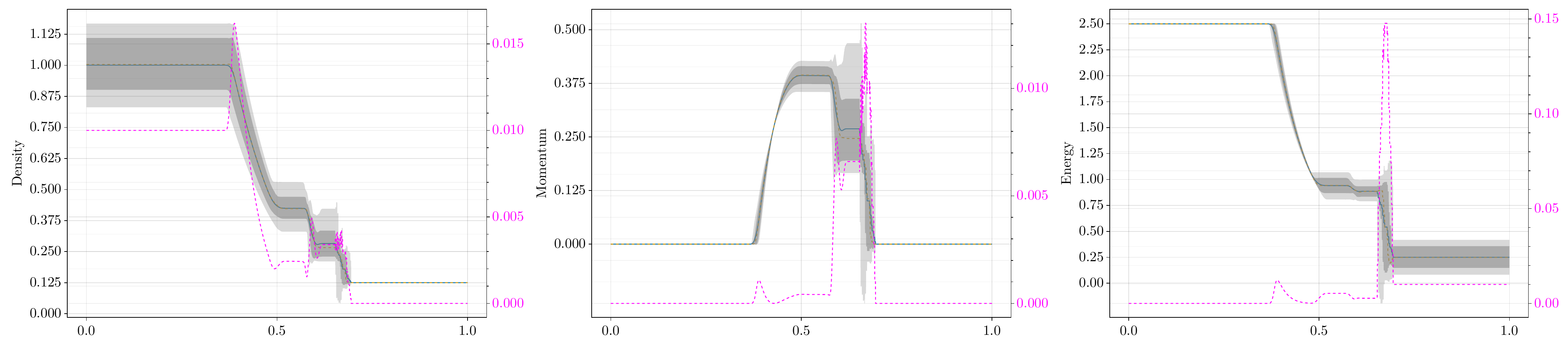}
\caption{Reference solution (top) and SG solution (bottom) for the
Euler equations with two independent random variables
at $t=0.1$ using $N=2^{14}$ cells and tensor-product 
 polynomials of degree $K=3$.}

\label{fig:euler_two_d_sg_solution}
\end{figure}

\section{Conclusion}
\label{sec:conclusion}

We introduced a framework for constructing hyperbolicity-\\preserving stochastic
Galerkin discretizations of nonlinear conservation laws using associative
truncated products on polynomial spaces. The central observation is that
associativity restores the commutativity of the multiplication matrices, which
in turn enables simultaneous diagonalization and provides a mechanism for
preserving hyperbolicity at the SG level.

In one stochastic dimension, we characterized ATPs on
$P_K(\Xi)$ through the single element $\phi_1\ast\phi_K$ and identified several
classes of examples, including collocation products and the associative
symmetric product based on Gaussian quadrature nodes. We also showed that, under
appropriate consistency and stability assumptions, truncated products converge
to the usual product as the polynomial degree increases. For systems with
rational fluxes, we derived sufficient conditions under which the SG 
flux remains hyperbolic, and we applied the framework to the one-dimensional 
isothermal Euler equations and the Euler equations.

The numerical experiments demonstrate that the proposed methods retain
hyperbolicity in the tested regimes while accurately capturing statistical
quantities such as the mean, variance, median, and confidence intervals. In
smooth regimes, the observed convergence under mesh refinement agrees with the
expected rate of the finite-volume discretization, while nonsmooth examples show
that the method remains robust in the presence of shocks.

Several questions remain open. In particular, although the tensor-product
construction extends the framework to multidimensional random variables, the
construction of associative truncated products on sparse or total-degree
polynomial spaces in multiple dimensions remains an important direction for
future work.

\section*{Acknowledgements}
The authors would like to thank Dongbin Xiu for the insightful discussions.
The work of Y. Xing is partially supported by the NSF grant DMS-2309590.	

\bibliographystyle{siam_preamble/siamplain}
\bibliography{refs.bib}

\appendix

\section{Proofs of theorems and lemmas}
\label{sec:appx_proofs}

We first start with the proof of \cref{lem:ps_not_associative}.

\begin{proof}[Proof of \cref{lem:ps_not_associative}]
To prove the claim, it suffices to show that
\begin{equation}
\phi_1\ps \left(\phi_1\ps \phi_K\right)\ne
\left(\phi_1\ps \phi_1\right) \ps \phi_K,\qquad K\ge 2.
\end{equation}

We know that $\phi_1 \phi_K=a_{K+1}^{-1}(\phi_{K+1}-b_{K+1} \phi_K
-c_{K+1}\phi_{K-1})$. Hence, we have \[
\phi_1\ps \phi_K =\frac{-1}{a_{K+1}} \left (b_{K+1} \phi_K
+c_{K+1}\phi_{K-1}\right)
\]
Applying the same idea again, we obtain
\[
\phi_1\ps(\phi_1\ps \phi_K)=
\frac{-b_{K+1}}{a_{K+1}}\phi_1\ps \phi_K
-\frac{c_{K+1}}{a_{K+1}}\phi_1\phi_{K-1}.
\]
On the other hand, if $K\ge 2$, then $\phi_1\ps \phi_1 =
\phi_1^2$. We have
\begin{align*}
\phi_1^2 &\phi_K =\frac{1}{a_{K+1}}\phi_1 \left(\phi_{K+1}-b_{K+1} \phi_K
-c_{K+1}\phi_{K-1}\right) \\
&=\frac{1}{a_{K+1}}\left(\frac{1}{a_{K+2}} \left(\phi_{K+2}-b_{K+1}\phi_{K+1}
-c_{K+2}\phi_K\right)-b_{K+1} \phi_1\phi_K -c_{K+1}\phi_1\phi_{K-1}\right).
\end{align*}
Hence, by ignoring the $\phi_{K+2}$ and $\phi_{K+1}$ terms, we have
\[
(\phi_1\ps\phi_1)\ps \phi_K=
-\frac{c_{K+2}}{a_{K+1}a_{K+2}}\phi_{K}
-\frac{b_{K+1}}{a_{K+1}}\phi_1\ps \phi_K-
\frac{c_{K+1}}{a_{K+1}}\phi_1\phi_{K-1}.
\]
Then, $\phi_1\ps(\phi_1\ps \phi_K)\ne (\phi_1\ps\phi_1)\ps \phi_K$ since
$c_{k},a_k\ne 0$ for all $k\ge 1$ (see \cite{szegőOrthogonalPolynomials1939} for
a proof of the claim $a_k,c_k\ne 0$).
\end{proof}

\begin{proof}[Proof of \cref{lem:associativity_PpPq}]
Let $\hp,\hq,\hr\in \mathbb{R}^{K+1}$, then by the associativity of
$\ast$ we have
\begin{equation}
\G_K\left(\P\left(\P(\hp)\hq\right)\hr \right)
=(p\ast q)\ast r= p\ast (q\ast r)=\G_K\left(\P(\hp)\P(\hq)\hr\right).
\end{equation}
Since $\G_K:\mathbb{R}^{K+1}\to \PK$ is bijective and the equation
above holds for all $\hr\in \mathbb{R}^{K+1}$, we obtain
$\P\left(\P(\hp)\hq\right)= \P(\hp)\P(\hq)$.
The proof of the second equality follows directly from first one:
\[
\P(\hp)\P\left(\P\left(\hp\right)^{-1}\hq\right)
= \P\left(\P(\hp)\P\left(\hp\right)^{-1}\hq\right)
= \P(\hq).
\]
\end{proof}

\end{document}